\newcommand{\abs}[1]{\lvert#1\rvert}
\newcommand{\bes}{\begin{equation*}}
\newcommand{\ees}{\end{equation*}}
\newcommand{\eps}{\varepsilon}
\newtheorem{thm}{Theorem}[section]
\newtheorem{rmk}{Remark}[section]
\newcommand{\be}{\begin{equation}}
\newcommand{\ee}{\end{equation}}
\newcommand{\ba}{\begin{array}}
\newcommand{\ea}{\end{array}}
\newcommand{\bea}{\begin{eqnarray}}
\newcommand{\eea}{\end{eqnarray}}
\newcommand{\beas}{\begin{eqnarray*}}
\newcommand{\eeas}{\end{eqnarray*}}
\title{Error estimates of a regularized finite difference method for the Logarithmic Schr\"{o}dinger equation with Dirac delta potential\thanks{This work was partially support by NSFC grant 12171041 and 11771036 (Y. Cai). }
}
\author{ Xuanxuan Zhou \thanks{ School of Mathematics and Statistics, Nanjing University of Science and Technology, Nanjing,
		210094, China ({\tt xuanxuanzhou0208@njust.edu.cn})}
   \and Tingchun Wang \thanks{School of Mathematics and Statistics, Nanjing University of Information Science
and Technology, Nanjing, 210044, China
 ({\tt @wangtingchun2010@gmail.com})}
\and Yong Wu \thanks{Laboratory of Mathematics and Complex Systems,
Ministry of Education, School of Mathematical Sciences, Beijing Normal University,
   Beijing 100875, China ({\tt 202231130038@bnu.edu.cn})}
 \and Yongyong Cai\thanks{Laboratory of Mathematics and Complex Systems,
Ministry of Education, School of Mathematical Sciences, Beijing Normal University,
   Beijing 100875, China ({\tt yongyong.cai@bnu.edu.cn})}
}
\date{}
\begin{document}

\maketitle

%%%%%%%%%%%%%%%%%%% Abstract %%%%%%%%%%%%%%%%%%%%%%%%%%%%%%%%%%
\begin{abstract}
In this paper, we introduce a conservative Crank-Nicolson-type finite difference schemes for the regularized logarithmic Schr\"{o}dinger equation (RLSE) with Dirac delta potential in 1D. The regularized logarithmic Schr\"{o}dinger equation with a small regularized parameter $0<\eps \ll 1$ is adopted to approximate the logarithmic Schr\"{o}dinger equation (LSE) with linear convergence rate $O(\eps)$. The numerical method can be used to avoid numerical blow-up and/or to suppress round-off error due to the logarithmic nonlinearity in LSE. Then, by using domain-decomposition technique, 
we can transform the original problem  into an interface problem. Different treatments on the interface conditions lead to different discrete schemes and it turns out that a simple discrete approximation of the Dirac potential coincides with one of the conservative finite difference schemes. The optimal $H^1$ error estimates and the conservative properties of the finite difference schemes are investigated. The Crank-Nicolson finite difference methods enjoy the second-order convergence  rate in time and space. Numerical examples are provided to support our analysis and show the accuracy and efficiency of the numerical method.
%study the orbital stability of the solitary solutions.
\end{abstract}

\begin{keywords}
  The Logarithmic Schr\"{o}dinger equation, Dirac delta potential, Inner boundary conditions, Regularized finite difference method, Convergence analysis. %Orbital stability.
\end{keywords}

\begin{AMS}
  35Q41, 65M70, 65N35
\end{AMS}
%%%%%%%%%%%%%%%%%%%%%%%%%%%%%%%%%%%% Introduction %%%%%%%%%%%%%%%%%%%%%%%%%%%%%%%%%%%%%%%
\section{Introduction}
\label{sec:intro}
In this paper, we consider the logarithmic Schr\"{o}dinger equation \cite{Bose:2011,IJ:1976,IJ:1979,Hansson:2009,Hefter:1985,Hernandez:1980,Krolik:2000,Martino:2003,Yasue:1978,Zlosh:2010,Pava:2017} with a Dirac delta potential in one dimension (1D)
\begin{equation}\label{problem}
\begin{cases}
i\partial_t u(x,t)=-\partial_{xx} u(x,t)+\lambda \delta(x)u(x,t)+\mu u(x,t)\ln(|u(x,t)|^2),\quad x\in \mathbb{R},\,t>0,\\
u(x,0)=u_0(x),\quad x\in\mathbb{R},
\end{cases}
\end{equation}
where $u:=u(x,t)$ is a complex-valued function, $\delta(x)$ is the Dirac delta function, $\lambda,\mu\in\mathbb{R}$. When $\lambda=0$, the model\eqref{problem} has been used to study many physical problems, such as quantum mechanics \cite{Yasue:1978}, quantum optics \cite{IJ:1976,IJ:1979,Hansson:2009,Krolik:2000}, nuclear physics \cite{Hefter:1985,Hernandez:1980}, transport and diffusion phenomena \cite{Martino:2003}, open quantum systems, effective quantum gravity \cite{Zlosh:2010}, theory of superfluidity, and Bose-Einstein condensation \cite{Bose:2011}. For $\lambda\neq 0$, the model \eqref{problem} can be viewed as a model of a singular interaction between nonlinear wave and an inhomogeneity\cite{Pava:2017}. The delta potential can be used to model an impurity, or defect, localized at the origin. A similar formal model with power nonlinearity has been introduced in \cite{Aklan:2015,Sulem:1999}.  
In addition, the Dirac delta function $\delta(x)$ could be replaced by the  delta-comb potential $V(x)=\sum_{n}\delta(x-dn)$ ($d>0$).

It is well-known that the point interaction operator $H=-\partial_{xx}+\lambda \delta(x)$ ($\lambda\neq0$) is well-defined with domain $\{f(x)\in H^1(\mathbb{R})\cap H^2(\mathbb{R}\backslash\{0\})| \partial_x f(0+)-\partial_x f(0-)=\lambda f(0)\}$, where $\partial_x f(0\pm)=\lim\limits_{\varepsilon\to0^+}\frac{f(\pm\varepsilon)-f(0)}{\pm\varepsilon}$. The LSE \eqref{problem} possesses the following important invariants, i.e. mass, momentum and energy 

$$
\begin{aligned}
M(t) &: =\|u(\cdot, t)\|_{L^2(\Omega)}^2=\int_{\Omega}|u(x, t)|^2 d x \equiv \int_{\Omega}\left|u_0(x)\right|^2 d x=M(0) \\
P(t) & :=\operatorname{Im} \int_{\Omega} \bar{u}(x, t) \partial_x u(x, t) d x \equiv \operatorname{Im} \int_{\Omega} \overline{u_0}(x) \partial_x u_0(x) d x=P(0), \quad t \geq 0 \\
E(t) & :=\int_{\Omega}\left[|\partial_x u(x, t)|^2 d x+\lambda|u(0,t)|^2+\mu F\left(|u(x, t)|^2\right)\right] d x \\
& \equiv \int_{\Omega}\left[\left|\partial_x u_0(x)\right|^2+\lambda|u_0(0)|^2+\mu F\left(\left|u_0(x)\right|^2\right)\right] d x=E(0),
\end{aligned}
$$
where $\operatorname{Im} f$ and $\bar{f}$ denote the imaginary part and complex conjugate of $f$, respectively, and
$$
F(\rho)=\int_0^\rho \ln (s) d s=\rho \ln \rho-\rho, \quad \rho \geq 0 .
$$
In the literature, the well-posedness and the orbitally stability of the standing waves with a peak-Gausson porfile for the LSE with Dirac delta potential can be found in \cite{Pava:2017}. 

For numerically solving the LSE with Dirac delta potential \eqref{problem}, there are two main difficulties need to be considered. The first one is how to deal with the Dirac delta potential which has many methods been given such as: finite difference schemes \cite{Peskin:1977,Good:2004,Tornberg:2003}(by treating the Dirac delta function either as a single point discontinuity, or a smooth function with small support), the finite element methods \cite{Fem:1991,Holmer:2007,Holmer:2007-2}(the Dirac delta function is incorporated directly into the discretization via integration), interface methods \cite{IIM:1994,Peskin:2002,Rutka:2006,Ejiim:2000,Ejiimns:2008,Multi-sym:2018}, function transformation \cite{Fdm:2020}, etc\cite{Wavelet:2017}.

Hereafter, due to the singularity of the logarithmic nonlinearity, it introduces tremendous difficulties in establishing mathematical theories, as well as in designing and analyzing numerical methods . To address this issue, instead of working directly with \eqref{problem}, we shall consider the following RLSE with a small regularized parameter $0<\varepsilon \ll 1$ as
\begin{equation}\label{RLSE}
\left\{
\begin{array}{l}i \partial_t u^{\varepsilon}(\mathbf{x}, t)=-\partial_{xx} u^{\varepsilon}(\mathbf{x}, t)+\lambda \delta(x)u^{\varepsilon}(x,t)+ \mu u^{\varepsilon}(\mathbf{x}, t) \ln \left(\varepsilon+\left|u^{\varepsilon}(\mathbf{x}, t)\right|\right)^2, \quad \mathbf{x} \in \Omega, \quad t>0, \\ u^{\varepsilon}(\mathbf{x}, 0)=u_0(\mathbf{x}), \quad \mathbf{x} \in \bar{\Omega} .
\end{array}
\right.
\end{equation}
the corresponding conserved quantities, global well-posedness and the convergence of its solution to the solution \eqref{problem} can be found in \cite{Baosu:2019}.  
%For the RLSE \eqref{RLSE}, 
%it can be similarly deduced that the mass, momentum, and energy are conserved.
The mass, momentum, and "regularized" energy \cite{Cazenave:2003} are formally conserved for the RLSE \eqref{RLSE} :
\begin{align}
M^{\varepsilon}(t) & :=\int_{\Omega}\left|u^{\varepsilon}(x, t)\right|^2 d x \equiv \int_{\Omega}\left|u_0(x)\right|^2 d x=M(0), \label{conservation:M}\\
P^{\varepsilon}(t) & :=\operatorname{Im} \int_{\Omega} \overline{u^{\varepsilon}}(x, t) \partial_x u^{\varepsilon}(x, t) d x \equiv \operatorname{Im} \int_{\Omega} \overline{u_0}(x) \partial_x u_0(x) d x=P(0), \quad t \geq 0, \label{momentum:1}\\
E^{\varepsilon}(t) & :=\int_{\Omega}\left[\left|\partial_x u^{\varepsilon}(x, t)\right|^2+\lambda|u^{\eps}(0,t)|^2+\mu F_{\varepsilon}\left(\left|u^{\varepsilon}(x, t)\right|^2\right)\right] d x \label{conservation:E}\\
& \equiv \int_{\Omega}\left[\left|\partial_x u_0(x)\right|^2+\lambda|u_0(0)|^2+\mu F_{\varepsilon}\left(\left|u_0(x)\right|^2\right)\right] d x=E^{\varepsilon}(0),\nonumber
\end{align}
\text{where}
\begin{align*}
F_{\varepsilon}(\rho) & =\int_0^\rho \ln (\varepsilon+\sqrt{s})^2 d s \\
& =\rho \ln (\varepsilon+\sqrt{\rho})^2-\rho+2 \varepsilon \sqrt{\rho}-\varepsilon^2 \ln (1+\sqrt{\rho} / \varepsilon)^2, \quad \rho \geq 0 .
\end{align*}
For brevity, we still write $u^{\varepsilon}(x,t)$ as $u(x,t)$ in the following sections.

For numerically solving the LSE\eqref{RLSE} with $\lambda=0$, a semi-implicit finite difference method was proposed and analyzed in \cite{Baosu:2019}, and the $H^1$-error estimates is $\frac{e^{CT|\ln(\eps)|^2}}{\eps}(\tau^2+h^2)$. Later, in \cite{Baosu:2019_2}, Bao and Su presented the time-splitting spectral method and the Crank-Nicolson finite difference method (CNFD) for \eqref{RLSE} without Dirac delta potential, and establish rigorous $L_2$ error estimates $C\ln(\eps^{-1})\tau^{1/2}$ for the Lie-Trotter splitting under a much weaker constraint on the time step $\tau, \tau \lesssim 1 /|\ln (\varepsilon)|^2$, instead of $\tau \lesssim \sqrt{\varepsilon} e^{-C T|\ln (\varepsilon)|^2}$ \cite{Baosu:2019} for some $C$ independent of $\varepsilon$.
	%, which is needed for the semi-implicit finite difference method . 
	In addition, Bao and Su  \cite{Baosu:2022} proposed a new systematic local energy regularization (LER) approach to overcome the singularity of the nonlinearity in the logarithmic Schr\"{o}dinger equation, and then presented and analyzed time-splitting schemes.

%symplectic method has been proposed

%For the NLS with a Dirac delta potential \eqref{problem}, by treating the Dirac delta function either as a single point discontinuity, or a smooth function with small support, the finite difference approximation schemes \cite{}  and a conservative wavelet collocation method\cite{Wavelet:2017} have been constructed. Later, based on the weak reformulation,  

%Based on the self-adjointness of the operator $H$, the NLS \eqref{problem} can  be regarded as an interface problem \cite{IIM:1994,Peskin:2002,Rutka:2006,Pan:2021,Ejiim:2000,Ejiimns:2008}. 
%In \cite{Ejiim:2015}, Bai and Wang proposed a explicit jump immersed interface method (EJIIM) for the 1D stationary Schr\"{o}dinger equations with a class of singular potentials. Later on, a weak multi-symplectic Runge-Kutta-Nystr\"{o}m (MSRKN) method is proposed in \cite{Multi-sym:2018}. The Delta function in \eqref{problem} induces a cusp singularity in the solution, and by introducing a function transformation\cite{Fdm:2020}, the NLS \eqref{problem} can be transformed into an equation with discontinuous drift, where the regularity of the solution can be improved. Following this approach, a third-order Runge-Kutta scheme is proposed for solving the transformed equation. 

To the best of our knowledge, very few published work \cite{Wavelet:2017}  has been devoted to numerically solve the RLSE \eqref{RLSE} by using the domain decomposition methods\cite{Ddm:2020,Ddm:1999,Ddm:2004}. Thus, the main objective of this work is to study efficient discretization of the RLSE \eqref{RLSE} with delta-function potential, based on the interface formulation (or the domain decomposition type).  We shall construct and analyze efficient finite difference methods for solving the RLSE \eqref{RLSE}. Through analysis and numerical experiments for the interface formulation, the second order central finite difference scheme for RLSE \eqref{RLSE} is equivalent to approximate the Dirac delta function by a finite grid delta-like function, e.g. $\delta_h(x)=\frac{1-|x|/h}{h}$ ($|x|<h$, $h$ is the mesh size) and $\delta_h(x)=0$ ($|x|\ge h$). 

This paper is organized as follows.  In section \ref{sec:fds}, after the interface problem re-formulation of RLSE \eqref{RLSE}, a conservative finite difference methods will be constructed, and their corresponding conservation laws and error analysis are presented in section \ref{sec:erroranalysis}. Numerical experiments are presented in section \ref{sec:experiments} to verify the theoretical results and study the orbital stability of solitary waves. Finally, in section \ref{sec:conclusions}, we draw some conclusions and outlooks.

Throughout the paper, $C$ will denote a constant independent of the discretization parameters (mesh size $h$, time step size $\tau$ and time step number $n$) that may change from line to line. We use the notation $A\lesssim B$ to mean there exists a constant $C>0$ independent of the discretization parameters such that $|A|\leq C |B|$.

%%%%%%%%%%%%%%%%%%%%%%%%%%% Finite difference method %%%%%%%%%%%%%%%%%%%%%%%%%%%
\section{Finite difference schemes}
\label{sec:fds}
For simplicity of notations and computational purpose, the NLS \eqref{problem} with a Dirac delta potential is truncated onto a bounded interval $\Omega=(-a,a)$ ($a>0$) with homogeneous Dirichlet boundary conditions(or periodic/homogeneous Neumann boundary conditions). The RLSE \eqref{RLSE} with the homogeneous Dirichlet boundary conditions collapses to
\begin{eqnarray}\label{problem_1}
\begin{aligned}
&i\partial_t u(x,t)=-\partial_{xx} u(x,t)+\lambda \delta(x)u(x,t)+\mu u(x,t)\ln(\varepsilon+|u(x,t)|)^2,\quad x\in\Omega,\,t>0,\\
&u(x,0)=u_0(x),\quad x\in\overline{\Omega};\quad u(-a,t)=u(a,t)=0,\quad t>0,
\end{aligned}
\end{eqnarray}
and the  corresponding conservation laws \eqref{conservation:M} and \eqref{conservation:E} hold. Similar to the whole space case \eqref{problem}, the domain of $H=-\partial_{xx}+\lambda \delta(x)$ is now  $\{f(x)\in H_0^1(\Omega)\cap H^2(\Omega\backslash\{0\})| \partial_x f(0+)-\partial_x f(0-)=\lambda f(0)\}$. Thus, it is natural to reformulate  \eqref{problem_1} into an equation defined on two subdomains ($(-a,0)$ and $(0,a)$) with the inner interface matching condition at  $x=0$ as
\begin{equation}\label{eq:jump}
\partial_x u(0+,t)-\partial_x u(0-,t)=\lambda u(0,t),\quad t>0.
\end{equation}
%The above interface condition can be also formally derived by integrating the first equation in \eqref{problem_1} over the interval $[-\varepsilon,\varepsilon]$ and letting $\varepsilon\rightarrow 0$.
Using the interface condition \eqref{eq:jump}, the NLS  \eqref{problem_1} with homogeneous Dirichlet boundary conditions can be rewritten for $u_{l}(x,t):=u(x,t)$ ($x\in[-a,0]$) and $u_r(x,t):=u(x,t)$ ($x\in[0,a]$) as 
\begin{equation}\label{problem_1d}
\begin{cases}
i\partial_t u_{l}(x,t)=-\partial_{xx}u_{l}(x,t)+\mu u_l(x,t) \ln(\varepsilon+|u_l(x,t)|)^2,\quad x\in\Omega_l=(-a,0),\, t>0,\\
 i\partial_t u_{r}(x,t)=-\partial_{xx}u_{r}(x,t)+\mu u_r(x,t) \ln(\varepsilon+|u_r(x,t)|)^2,\quad x\in\Omega_r=(0,a),\, t>0,\\
u_l(x,0)=u_{0}(x),\,x\in\Omega_l; \quad u_r(x,0)=u_{0}(x),\,x\in\Omega_r;\quad u_l(-a,t)=0,
 u_r(a,t)=0,\, t>0, 
 \end{cases}
 \end{equation}
 with the interface condition \eqref{eq:jump} at $x=0$ as
 \begin{equation}\label{eq:interface}
u_l(0,t)=u_r(0,t),\quad \partial_x u_{r}(0,t)-\partial_x u_{l}(0,t)=\lambda u(0,t),\quad t>0.
\end{equation}
Now, it suffices to consider \eqref{problem_1d}-\eqref{eq:interface} for numerical purpose.
For a positive integer $M>0$, we choose the spatial mesh size $h=a/M$ and time step size $\Delta t=\tau$, and the grid points are $x_{j}=-a+jh$ ($j=0,1,\ldots, 2M$) with the time steps  $t_{k}=k \tau$ ($k=0,1,2,\cdots$).
Let $U_{j}^{k},U_{l,j}^k,U_{r,j}^k$ be the numerical approximation of the exact solution $u\left(x_{j}, t_{k}\right)$, $u_l\left(x_{j}, t_{k}\right)$, $u_r\left(x_{j}, t_{k}\right)$, respectively. 
Introduce the mesh function space 
\begin{equation*}
Z_h^0:=\{ u_j |j=0,\dots,2M,u_0=u_{2M}=0\}\in \mathbb{C}^{2M+1},
\end{equation*}
and we shall denote $U^k=(U_0^k,U_1^k,\cdots,U_{2M}^k)^T\in Z_h^0$ (also $U^k=(U_{l,0}^k,U_{l,1}^k,\cdots,U_{l,M}^k,U_{r,M+1}^k,\cdots,U_{r,2M}^k)^T$) as the numerical solution vector.
For any mesh function $U^k\in Z_h^0$ ($k\ge0$), the finite difference operators $\delta_t^+U^k,U^{k+1/2},\delta_{x}^\pm U^k,\delta_x U^k,\delta_x^2U^k\in Z_h^0$ are defined point-wisely  as
\begin{align*}
&\delta_{t}^+ U^{k}_j=\frac{1}{\tau}\left(U^{k+1}_j-U^{k}_j\right), \quad U^{k+\frac{1}{2}}_j=\frac{1}{2}\left(U^{k+1}_j+U^{k}_j\right),\\
&\delta_{x}^- U^k_{j}=\frac{1}{h}\left(U^k_{j}-U^k_{j-1}\right), \quad \delta_{x}^+ U^k_{j}=\frac{1}{h}\left(U^k_{j+1}-U^k_{j}\right), \\
&\delta_{x} U^k_{j}=\frac{1}{2h}\left(U^k_{j+1}-U^k_{j-1}\right),\quad\delta_{x}^{2} U^k_{j}=\frac{1}{h^2}\left(U^k_{j+1}-2U^k_{j}+U^k_{j-1}\right).
\end{align*}
%%%%%%%%%

For numerically solving \eqref{problem_1d}-\eqref{eq:interface}, inspired by \cite{Caizhou:2023}, we propose the following Crank-Nicolson finite difference  scheme (CNFD):\\
%Given $U_{l,j}^0=u_0(x_j)$ ($0\leq j\leq M$) and $U_{r,j}^0=u_0(x_j)$ ($M\leq j\leq 2M$), for $k\ge0$, $U^{k+1}\in Z_h^0$ is given by
%\begin{equation}\label{eq:cnfd1}
%\begin{cases}
%i\delta_t^+U_{l,j}^{k}=-\delta_{x}^2U_{l,j}^{k+1/2}+\mu\tilde{Q}(U_{l,j}^{k+1},U_{l,j}^{k}),\quad j=1,\dots,M-1,\\
%i\delta_t^+U_{r,j}^{k}=-\delta_{x}^2U_{r,j}^{k+1/2}+\mu\tilde{Q}(U_{r,j}^{k+1},U_{r,j}^{k}),\quad j=M+1,\dots,2M-1,\\
%U_{l,M}^{k+1/2}=U_{r,M}^{k+1/2},\quad \delta_x^+U_{r,M}^{k+1/2}-\delta_x^-U_{l,M}^{k+1/2}=\lambda U_{M}^{k+1/2},\quad U_{l,0}^{k+1}=0,\quad U_{r,2M}^{k+1}=0,\quad k\ge0,
%\end{cases}
%\end{equation}
\noindent {\bf{CNFD:}} Given $U_{l,j}^0=u_0(x_j)$ ($0\leq j\leq M$) and $U_{r,j}^0=u_0(x_j)$ ($M\leq j\leq 2M$), for $k\ge0$, $U^{k+1}\in Z_h^0$ is given by
\begin{equation}\label{eq:cnfd}
\begin{cases}
i\delta_t^+U_{l,j}^{k}=-\delta_{x}^2U_{l,j}^{k+1/2}+\mu\tilde{Q}(U_{l,j}^{k+1},U_{l,j}^{k}),\quad j=1,\dots,M-1,\\
i\delta_t^+U_{r,j}^{k}=-\delta_{x}^2U_{r,j}^{k+1/2}+\mu\tilde{Q}(U_{r,j}^{k+1},U_{r,j}^{k}),\quad j=M+1,\dots,2M-1,\\
i\delta_t^+U_{l,M}^{k}=-\frac{1}{h^2}(U_{l,M+1}^{k+1/2}-2U_{l,M}^{k+1/2}+U_{l,M-1}^{k+1/2})+\mu\tilde{Q}(U_{l,M}^{k+1},U_{l,M}^{k}),\\
i\delta_t^+U_{r,M}^{k}=-\frac{1}{h^2}(U_{r,M+1}^{k+1/2}-2U_{r,M}^{k+1/2}+U_{r,M-1}^{k+1/2})+\mu\tilde{Q}(U_{r,M}^{k+1},U_{r,M}^{k}),\\
U_{l,M}^{k+1/2}=U_{r,M}^{k+1/2},\quad \delta_{x}U_{r,M}^{k+1/2}-\delta_{x}U_{l,M}^{k+1/2}=\lambda U_{M}^{k+1/2},\quad U_{l,0}^{k+1}=0,\quad U_{r,2M}^{k+1}=0,\quad k\ge0.
\end{cases}
\end{equation}
where $\tilde{Q}(z_1,z_2)$ for $z_1,z_2\in\mathbb{C}$ is defined as
\begin{equation}\label{eq:tildeQ}
\tilde{Q}(z_1,z_2)=\frac{
1}{2}\frac{Q(|z_1|^2)-Q(|z_2|^2)}{|z_1|^2-|z_2|^2}(z_1+z_2)=\frac{1}{2}\int_0^1q(\theta|z_1|^2+(1-\theta)|z_2|^2)\,d\theta\, (z_1+z_2),
\end{equation}
and $Q(s)=\int_{0}^{s}q(s_1)ds_1=2\left(s-\varepsilon^2\right) \ln (\varepsilon+\sqrt{s})- s+2 \varepsilon  \sqrt{s},\, q(s)= \ln (\varepsilon+\sqrt{s})^2$ ($s\ge0$).  

CNFD \eqref{eq:cnfd} is a two-level implicit and conservative scheme, and the interface jump condition \eqref{eq:interface} is approximated at the second-order accuracy ($\delta_x u_{l}(x_M,t),\delta_x u_{r}(x_M,t)$ is a second order approximation of ($\partial_x u_l(x_M,t),\partial_x u_r(x_M,t)$). 

Using the condition $U_{l,M}^k=U_{r,M}^k$, for CNFD \eqref{eq:cnfd}, the solution vector $U^k\in Z_h^0$ ($k\ge0$) is given by
\begin{equation}\label{U:def}
U^k=(U_{l,0}^k,U_{l,1}^k,\dots,U_{l,M-1}^k,U_{l,M}^k,U_{r,M+1}^k,\dots,U_{r,2M-1}^k,U_{r,2M}^k)^T.
\end{equation}
Introducing the index sets
\begin{equation}\label{Index:def}
\mathcal{T}_M^0=\{0,1,2,\cdots,2M-1\},\quad \mathcal{T}_M^{0,1}=\mathcal{T}_M^0\backslash\{M\},\quad 
\mathcal{T}_M=\{1,2,\cdots,2M-1\},\quad \mathcal{T}_M^1=\mathcal{T}_M\backslash\{M\},
\end{equation}
 CNFD can be reformulated for $U^k\in Z_h^0$ ($k\ge0$, $U^0_j=u_0(x_j)$, $0\leq j\leq 2M$) in the equivalents forms:
\begin{align}
&i\delta_t^+U_{j}^{k}=-\delta_{x}^2U_{j}^{k+1/2}+\lambda b_j U_j^{k+1/2}+\mu\tilde{Q}(U_j^{k+1},U_j^{k}),\quad \,j\in\mathcal{T}_M,\label{Fdm_2-1}\\
&U_0^{k+1}=U_{2M}^{k+1}=0, \quad k\ge0,\label{Fdm_2-2}
\end{align}
where $b=(b_0,b_1,\cdots,b_{2M})^T\in Z_h^0$ is given by
\begin{equation}
b_j=
\begin{cases}
0,&j\neq M,\\
\frac{1}{h},&j=M.
\end{cases}
\end{equation}

%%%%%%%% remark %%%%%
\begin{rmk}
For deriving CNFD \eqref{Fdm_2-1}-\eqref{Fdm_2-2} from \eqref{eq:cnfd}, we have used the  two approximations at the point $x_M=0$,  such that
\begin{equation*}
2i\delta_t^+U_{M}^{k}=-\frac{1}{h^2}(U_{l,M+1}^{k+1/2}-2U_{M}^{k+1/2}+U_{M-1}^{k+1/2}+U_{M+1}^{k+1/2}-2U_{M}^{k+1/2}+U_{r,M-1}^{k+1/2})+2\mu\tilde{Q}(U_M^{k+1},U_M^{k}).
\end{equation*}
Recalling the interface condition which implies $U_{l,M+1}^{k+1/2}+U_{r,M-1}^{k+1/2}=-2\lambda hU_M^{k+1/2}+U_{M-1}^{k+1/2}+U_{M+1}^{k+1/2}$, \eqref{Fdm_2-1} follows.  It is easy to observe that if we approximate the delta potential in \eqref{problem_1d} with a regular function $\delta_h(x)$ such that $\delta_h(x)=\frac{1-|x|/h}{h}$ ($|x|<h$) and $\delta_h(x)=0$ ($|x|\ge h$), the conventional Crank-Nicolson type scheme would  lead to the same discretization \eqref{Fdm_2-1}-\eqref{Fdm_2-2}.
\end{rmk}

Before presenting the discrete conservation properties  and error analysis of the constructed scheme CNFD \eqref{Fdm_2-1}-\eqref{Fdm_2-2}, some useful definitions and lemmas are introduced. For any mesh functions $U, V\in Z_h^0$, we introduce the $l^2$ inner product and the norms ($l^2$, $l^\infty$, $l^q$ ($1<q<+\infty$), semi-$H^1$),
\begin{align*}
&(U, V)=h\sum_{j=0}^{2M-1} U_{j} \overline{V}_{j},\quad\|U\|=\sqrt{(U, U)}, 
\quad\|U\|_{\infty}=\max _{0 \leq j \leq 2M}\left|U_{j}\right|,\\
&\|U\|_{q}^q=h\sum_{j=0}^{2M-1} |U_{j}|^q,\quad |U|_{H^1}=\left(\delta_{x}^+ U, \delta_{x}^+ U\right), 
\end{align*}
where $\bar{c}$ is the complex conjugate of $c\in\mathbb{C}$.
The following summation-by-parts lemma is useful.
%%%%%%%%%%%%%%%%lemmas
\begin{lemma}\label{delta_x}\cite{Caizhou:2023}
For any mesh functions $U,V\in Z_h^0$, there hold
\begin{align*}
&(\delta_x^2 U,V)=-(\delta_x^+ U,\delta_x^+ V),\quad (\delta_x^2 U,U)=-\|\delta_x^+ U\|^2,\\
&(\delta_x^2 U,V)-h\delta_x^2U_M \overline{V}_M=-(\delta_x^+ U,\delta_x^+ V)+\frac{1}{h}(2U_M\overline{V_M}-\overline{V_M}(U_{M+1}+U_{M-1})),\\
&(\delta_x^2 U,U)-h\delta_x^2U_M \overline{U}_M=-\|\delta_x^+ U\|^2+\frac{1}{h}(2U_M\overline{U_M}-\overline{U_M}(U_{M+1}+U_{M-1})).
\end{align*}
\end{lemma}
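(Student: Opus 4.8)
The statement is attributed to \cite{Caizhou:2023}, and all three lines follow from one discrete Abel summation (``summation by parts'') together with the homogeneous Dirichlet conditions $U_0=U_{2M}=V_0=V_{2M}=0$ encoded in $Z_h^0$. The plan is as follows.

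First I would prove the basic identity $(\delta_x^2U,V)=-(\delta_x^+U,\delta_x^+V)$. Using $\delta_x^2U_j=\tfrac1h\big(\delta_x^+U_j-\delta_x^+U_{j-1}\big)$ and the definition of the inner product,
\[
(\delta_x^2U,V)=\sum_{j=1}^{2M-1}\big(\delta_x^+U_j-\delta_x^+U_{j-1}\big)\overline{V_j},
\]
where the $j=0$ summand has been dropped because $V_0=0$. Re-indexing the sum that carries $\delta_x^+U_{j-1}$ so that the forward difference falls on $V$, and summing by parts, the right-hand side becomes $-h\sum_{j}(\delta_x^+U_j)\overline{\delta_x^+V_j}$ plus endpoint contributions supported only at $j=0$ and $j=2M$; a short computation shows these endpoint contributions are precisely the endpoint terms of $(\delta_x^+U,\delta_x^+V)$ and cancel once $U_0=U_{2M}=V_0=V_{2M}=0$ is inserted. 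This gives $(\delta_x^2U,V)=-(\delta_x^+U,\delta_x^+V)$, and the second assertion $(\delta_x^2U,U)=-\|\delta_x^+U\|^2$ is the special case $V=U$, since $\|\delta_x^+U\|^2=(\delta_x^+U,\delta_x^+U)$.

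Next, the two interface identities are purely algebraic rearrangements of the first. Since $h\,\delta_x^2U_M\,\overline{V_M}=\tfrac1h\big(U_{M+1}-2U_M+U_{M-1}\big)\overline{V_M}$, subtracting $h\,\delta_x^2U_M\,\overline{V_M}$ from both sides of $(\delta_x^2U,V)=-(\delta_x^+U,\delta_x^+V)$ and rewriting $-h\,\delta_x^2U_M\,\overline{V_M}=\tfrac1h\big(2U_M\overline{V_M}-\overline{V_M}(U_{M+1}+U_{M-1})\big)$ yields exactly the identity on the second line of the statement; setting $V=U$ gives the one on the third line.

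I do not expect a genuine obstacle: the argument is elementary. The only point that requires care is the index bookkeeping, because the inner product runs over $j=0,\dots,2M-1$ while $\delta_x^+$ and $\delta_x^2$ shift the index by one unit, so every term produced by a shift at the two endpoints must be matched against a vanishing boundary value before it is discarded. Alternatively, one may simply invoke \cite{Caizhou:2023} for the first line and note the interface identities as immediate rearrangements.
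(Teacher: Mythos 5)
Your proof is correct: the first identity is the standard discrete summation-by-parts computation, in which the endpoint terms produced by the index shift are exactly absorbed by $U_0=U_{2M}=V_0=V_{2M}=0$ (note in particular that the $j=2M-1$ term of $(\delta_x^+U,\delta_x^+V)$ equals $\tfrac1h U_{2M-1}\overline{V_{2M-1}}$ and matches the surviving boundary term), and the two interface identities are indeed nothing more than subtracting $h\,\delta_x^2U_M\,\overline{V_M}=\tfrac1h(U_{M+1}-2U_M+U_{M-1})\overline{V_M}$ from both sides. The paper itself gives no proof and simply cites \cite{Caizhou:2023}, so your elementary verification is exactly the argument being invoked.
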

Similar to the NLS \eqref{problem_1d} on the continuous level, the proposed CNFD \eqref{Fdm_2-1}-\eqref{Fdm_2-2} have the following conservation properties.
\begin{lemma}\label{conservation} 
 CNFD \eqref{Fdm_2-1}-\eqref{Fdm_2-2} satisfies the conservation laws:
\begin{align}
&E^{k}:=\|\delta_x^+U^{k}\|^2+ \lambda|U_{M}^{k}|^2+ \mu h\sum\limits_{j\in\mathcal{T}_M^0}Q(|U_j^{k}|^2)=E^0,\label{energy}\\
&M^{k}:=\|U^k\|^2=M^0.\label{mass}
\end{align}
\end{lemma}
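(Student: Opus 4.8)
The plan is to establish the two conservation laws by the standard energy-method argument for Crank--Nicolson schemes: test equation \eqref{Fdm_2-1} against appropriate discrete multipliers and use Lemma \ref{delta_x} to handle the discrete Laplacian.

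For mass conservation \eqref{mass}, I would take the discrete $l^2$ inner product of \eqref{Fdm_2-1} with $U^{k+1/2}$, i.e. multiply by $h\overline{U_j^{k+1/2}}$ and sum over $j\in\mathcal{T}_M$ (equivalently over $j\in\mathcal{T}_M^0$ after incorporating the boundary values \eqref{Fdm_2-2}). The first term gives $i(\delta_t^+U^k,U^{k+1/2}) = \frac{i}{2\tau}(\|U^{k+1}\|^2-\|U^k\|^2) + \frac{i}{2\tau}\big((U^{k+1},U^k)-(U^k,U^{k+1})\big)$, whose real part is $\frac{1}{2\tau}(\|U^{k+1}\|^2-\|U^k\|^2)$ up to the purely imaginary cross terms; more cleanly, $(\delta_t^+U^k,U^{k+1/2}) = \frac1{2\tau}(\|U^{k+1}\|^2-\|U^k\|^2) + i\,\mathrm{Im}(\cdots)$, so taking imaginary parts isolates $\|U^{k+1}\|^2-\|U^k\|^2$. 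By Lemma \ref{delta_x}, $(-\delta_x^2U^{k+1/2},U^{k+1/2}) = \|\delta_x^+U^{k+1/2}\|^2\in\mathbb{R}$. The delta term $\lambda(b_\cdot U^{k+1/2}, U^{k+1/2}) = \lambda|U_M^{k+1/2}|^2\in\mathbb{R}$ is real. Finally, by the definition \eqref{eq:tildeQ}, $\tilde Q(U_j^{k+1},U_j^k) = \frac12\left(\int_0^1 q(\cdots)\,d\theta\right)(U_j^{k+1}+U_j^k)$ with the integral real-valued, so $(\mu\tilde Q(U^{k+1},U^k),U^{k+1/2})$ has the form (real scalar per node) times $|U_j^{k+1}+U_j^k|^2/2$, hence is real. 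Taking imaginary parts of the whole identity kills every term except the time difference, yielding $\|U^{k+1}\|^2=\|U^k\|^2$.

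For energy conservation \eqref{energy}, I would instead test \eqref{Fdm_2-1} against $\delta_t^+U^k$, i.e. multiply by $h\overline{\delta_t^+U_j^k}$, sum over $j$, and take real parts (or $2\,\mathrm{Re}$). The left side $i(\delta_t^+U^k,\delta_t^+U^k) = i\|\delta_t^+U^k\|^2$ is purely imaginary, so its real part vanishes. For the Laplacian term, $\mathrm{Re}(-\delta_x^2U^{k+1/2},\delta_t^+U^k) = \mathrm{Re}(\delta_x^+U^{k+1/2},\delta_x^+\delta_t^+U^k) = \frac{1}{2\tau}\big(\|\delta_x^+U^{k+1}\|^2-\|\delta_x^+U^k\|^2\big)$ using Lemma \ref{delta_x} and the algebraic identity $2\,\mathrm{Re}\big((a+b),(a-b)\big)=|a|^2-|b|^2$ applied to $\delta_x^+U^{k+1},\delta_x^+U^k$. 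Similarly the delta term contributes $\mathrm{Re}\,\lambda(b_\cdot U^{k+1/2},\delta_t^+U^k) = \frac{\lambda}{2\tau}(|U_M^{k+1}|^2-|U_M^k|^2)$. For the nonlinear term, the key point is that the quotient form in \eqref{eq:tildeQ} is exactly the discrete chain rule: $2\,\mathrm{Re}\big(\tilde Q(U_j^{k+1},U_j^k)\,\overline{\delta_t^+U_j^k}\big) = \frac{1}{\tau}\,\frac{Q(|U_j^{k+1}|^2)-Q(|U_j^k|^2)}{|U_j^{k+1}|^2-|U_j^k|^2}\cdot\mathrm{Re}\big((U_j^{k+1}+U_j^k)(\overline{U_j^{k+1}-U_j^k})\big) = \frac1\tau\big(Q(|U_j^{k+1}|^2)-Q(|U_j^k|^2)\big)$, since $\mathrm{Re}\big((a+b)\overline{(a-b)}\big)=|a|^2-|b|^2$. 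Multiplying by $\mu h$ and summing over $j\in\mathcal{T}_M^0$ gives the $\mu$-part of the energy difference. Collecting all terms and multiplying by $2\tau$ yields $E^{k+1}=E^k$, and iterating down to $k=0$ gives both \eqref{energy} and \eqref{mass}.

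The only genuinely delicate point is the bookkeeping at the interface node $j=M$: one must verify that the discrete summation-by-parts (the plain identities $(\delta_x^2U,V)=-(\delta_x^+U,\delta_x^+V)$ in Lemma \ref{delta_x}) applies cleanly to the \emph{reformulated} scheme \eqref{Fdm_2-1}, where the delta potential has been absorbed into the term $\lambda b_j U_j^{k+1/2}$ and the stencil at $j=M$ is the ordinary three-point Laplacian over the full index range $\mathcal{T}_M$. Since the reformulation \eqref{Fdm_2-1}--\eqref{Fdm_2-2} is exactly a standard CNFD on $Z_h^0$ with an added diagonal potential $\lambda b$, the first pair of identities in Lemma \ref{delta_x} suffices and no interface-correction terms survive; one should also note the degenerate case $|U_j^{k+1}|=|U_j^k|$ in \eqref{eq:tildeQ}, where the integral representation $\frac12\int_0^1 q(\theta|z_1|^2+(1-\theta)|z_2|^2)\,d\theta\,(z_1+z_2)$ shows $\tilde Q$ is well-defined and the chain-rule identity degenerates to $q(|U_j^k|^2)\,\mathrm{Re}((U_j^{k+1}+U_j^k)\overline{\delta_t^+U_j^k})\tau = Q'(|U_j^k|^2)(|U_j^{k+1}|^2-|U_j^k|^2)$, consistent with the mean-value form of $Q$. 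Everything else is routine real/imaginary-part extraction.
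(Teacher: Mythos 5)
Your proposal is correct and follows essentially the same route as the paper: test against $U^{k+1/2}$ and take imaginary parts for mass, test against $U^{k+1}-U^k$ (your $\delta_t^+U^k$, the same multiplier up to the factor $1/\tau$) and take real parts for energy, using the summation-by-parts identities of Lemma \ref{delta_x} and the discrete chain-rule structure of $\tilde Q$ in \eqref{eq:tildeQ}. Your additional remarks on the interface bookkeeping for the reformulated scheme and on the degenerate case $|U_j^{k+1}|=|U_j^k|$ are correct refinements of details the paper leaves implicit.
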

%%%%%proofj=1,2,\cdots,M-1,M+1,\cdots,2M-1
\begin{proof}
Firstly, multiplying \eqref{Fdm_2-1} by $h\overline{U_j^{k+1/2}}$ ($j\in\mathcal{T}_M^0$), summing all together and taking the imaginary parts, noticing the Lemma \ref{delta_x} , we can obtain
\begin{align*}
\frac{1}{2\tau}\left(M^{k+1}-M^k\right)&=\mu \text{Im}(U_M^{k+1/2}\overline{U_M^{k+1/2}})- \text{Im}(\delta_x^2U^{k+1/2},U^{k+1/2})=0,%\\
%&+h\mu\text{Im}\sum\limits_{j=1,j\neq M}^{2M-1}\tilde{Q}(U_j^k,U_j^{k+1})\overline{U_j^{k+1/2}},
\end{align*}
and the mass conservation holds. For the energy conservation, multiplying both sides of \eqref{Fdm_2-1} by $h(\overline{U_j^{k+1}-U_j^{k}})$ ($j\in\mathcal{T}_M^0$),   summing all together and taking the real parts, we have
\begin{align*}
\frac{h}{\tau}\text{Im}\sum\limits_{j\in\mathcal{T}_M^0}(U_j^{k+1}-U_j^{k})(\overline{U_j^{k+1}-U_j^{k}})&=\lambda \text{Re}\,(U_M^{k+1/2}(\overline{U_M^{k+1}-U_M^k}))- \text{Re}(\delta_x^2U^{k+1/2},U^{k+1}-U^k)\\
&+h\mu\text{Re}\,\sum\limits_{j\in\mathcal{T}_M^0}\tilde{Q}(U_j^{k+1},U_j^{k})(\overline{U_j^{k+1}-U_j^{k}}).
\end{align*}
Noticing  Lemma \ref{delta_x}, a direct calculation leads to 
\begin{align*}
\|\delta_x^+U^{k+1}\|^2+\lambda|U_{M}^{k+1}|^2+ \mu h\sum\limits_{j\in\mathcal{T}_M^0}Q(|U_j^{k+1}|^2)
=\|\delta_x^+U^{k}\|^2+\lambda|U_{M}^{k}|^2+ \mu h\sum\limits_{j\in\mathcal{T}_M^0}Q(|U_j^{k}|^2),
\end{align*}
i.e.  the energy conservation \eqref{energy} holds.
\end{proof}
%%%%%%%%%%%%%%%%%
\begin{rmk}\label{rmk:apriori}
Lemma \ref{conservation} implies the {\it a priori} $l^\infty$ bound of the numerical solution $U^k$  ($k\ge0$, see Appendix \ref{appendix:A} for details), for CNFD, i.e.
there exists $C>0$ independent of $h$, $\tau$ and $k$ such that $\|U^k\|_\infty\leq C$. %from the equation \eqref{Fdm_1-2}, we know $U_{M+1}^k+U_{M-1}^k=(2+\lambda h)U_M^k$. Noticing the discrete conservation laws of CNFD-I and utilizing the Sobolev inequality \cite{Wang:2013,Gaosun:2012}, it could be easily get $\max_{0\leq j\leq 2M,k\geq 0}\left\{|U_j^k|\right\}\leq M_0$, $M_0$ is a positive constant independent of $h$ and $\tau$(see Appendix \ref{appendix:B} for details).
\end{rmk}

%%%%%%%%%%%%%%%%%%%%%%%%%%%%%%%%%%%%%%%%%%%convergence         %%%%%%%%%
\section{Error estimates}
\label{sec:erroranalysis}
Below, based on the energy method, we shall rigorously establish the error estimates of the proposed CNFD \eqref{Fdm_2-1}-\eqref{Fdm_2-2} in the fully-discrete form. Let $0<T<T^*$ with $T^*$ being the  maximal existence time of the solution $u(x,t)$ to the nonlinear Regularized Logarithmic Schr\"{o}dinger equation \eqref{RLSE} for a fixed $0\leq \varepsilon\ll 1$. Motivated by the analytical results in \cite{Baosu:2019,Caizhou:2023}, we assume $ u(x,t)\in L^{\infty}\left([0,T], H_0^1(\Omega)\cap H^2(\Omega\backslash\{0\})\right),u_x(0+)-u_x(0-)=\lambda u(0)$, and 

\begin{equation*}(\text{A})
\begin{split}
&\|u\|_{W^{1,\infty}([0,T],W^{4,\infty}(\Omega\backslash\{0\}))}
% \cap 
+\|u\|_{ W^{2,\infty}([0,T],L^{\infty}(\Omega\backslash\{0\}))} \\
&+
\|u\|_{ W^{3,\infty}([0,T],W^{2,\infty}(\Omega\backslash\{0\}))}
+\|u\|_{ W^{4,\infty}([0,T],L^{\infty}(\Omega\backslash\{0\})) }\leq M_1,
\end{split}
\end{equation*}

Define the error function $e^k\in Z_h^0$ ($k=0,1,2,\cdots$) as
\begin{align*}
&e_{j}^{k}=u(x_j,t_k)-U_{j}^{k}, \quad 0 \leq j \leq 2M.
\end{align*}
Then we have the following convergence results for the finite difference discretizations.
%%%%%%%%%%%%%%%%%%%%%%
\begin{thm}\label{convergence_fdm1}
Under the assumption (A), there exist $h_0>0$ and $\tau_0>0$ sufficiently small, when $0<h\leq h_0,0<\tau\leq \tau_0$, CNFD \eqref{Fdm_2-1}-\eqref{Fdm_2-2} satisfies the following error estimates:
\begin{eqnarray}\label{convergence_2}
\begin{aligned}
&\left\|e^{k}\right\| \leq C_{\varepsilon}(\tau^{2}+\tau h)+Ch^2,\,
\left\|\delta_x^+e^{k}\right\| \leq C_{\varepsilon}(\tau^{2}+\tau h)+Ch^2,\\
&\left\|e^{k}\right\|_{\infty} \leq C_{\varepsilon}(\tau^{2}+\tau h)+Ch^2,  k\geq 0.
\end{aligned}
\end{eqnarray}
where $C_{\varepsilon}=M_1\max{(|\ln(\eps+M_1)|,\frac{1}{M_1^4},\varepsilon^{-2})}$.
\end{thm}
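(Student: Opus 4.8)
The plan is to carry out a standard energy-method convergence proof for the Crank--Nicolson finite difference scheme, but with careful attention to the interface point $x_M = 0$ and to the $\varepsilon$-dependence coming from the regularized logarithmic nonlinearity. First I would derive the truncation error: inserting the exact solution $u(x_j,t_k)$ into \eqref{Fdm_2-1}--\eqref{Fdm_2-2}, I would define the local truncation error $\xi_j^k$ and, using the regularity assumption (A) together with the interface matching condition $u_x(0+)-u_x(0-)=\lambda u(0)$, show $\|\xi^k\| \lesssim \tau^2 + h^2$ at interior points $j \in \mathcal{T}_M^1$ and $|\xi_M^k| \lesssim \tau^2 + h$ at the interface point (the loss of one order in $h$ there reflecting that $\delta_x^2$ at $x_M$ only sees a one-sided $H^4$ bound on each side and the $\frac1h$ factor in $b_M$); combined in the $l^2$ norm this yields the $\tau^2 + \tau h + h^2$ structure, with the $\tau h$ term carrying a weight $h^{1/2}$ from the single interface node. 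The factor $C_\varepsilon$ enters through Taylor expanding $Q$ and $q = \ln(\varepsilon+\sqrt{s})^2$: derivatives of $q$ blow up like $\varepsilon^{-1}$, $\varepsilon^{-2}$ near $s=0$, so bounding time- and space-differences of $\tilde Q(u(x_j,t_{k+1}),u(x_j,t_k))$ produces the stated $C_\varepsilon = M_1 \max(|\ln(\varepsilon+M_1)|, M_1^{-4}, \varepsilon^{-2})$.

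Next I would set up the error equation for $e^k$: subtracting the scheme from the consistency relation gives $i\delta_t^+ e_j^k = -\delta_x^2 e_j^{k+1/2} + \lambda b_j e_j^{k+1/2} + \mu(\tilde Q(u_j^{k+1},u_j^k) - \tilde Q(U_j^{k+1},U_j^k)) + \xi_j^k$ for $j \in \mathcal{T}_M$, with $e_0^k = e_{2M}^k = 0$. I would then run the energy estimate in the style of Lemma \ref{conservation}: taking the $l^2$ inner product with $h\,\overline{e_j^{k+1/2}}$ and the imaginary part controls $\|e^{k+1}\|^2 - \|e^k\|^2$; the delta-potential term $\lambda (b_j e_j^{k+1/2}, e_j^{k+1/2})$ is real and drops out, exactly as mass is conserved; what remains is to bound the nonlinear difference and the truncation contribution. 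Using the integral representation \eqref{eq:tildeQ} of $\tilde Q$ and the a priori $l^\infty$ bounds on both $u$ (from (A)) and $U^k$ (from Remark \ref{rmk:apriori}), one gets a local Lipschitz estimate $\|\tilde Q(u^{k+1},u^k) - \tilde Q(U^{k+1},U^k)\| \lesssim C_\varepsilon (\|e^{k+1}\| + \|e^k\|)$, and similarly a semi-$H^1$ version is needed for the $\|\delta_x^+ e^k\|$ bound — for that I would instead pair the error equation with $h\,\overline{\delta_t^+ e_j^k}$ and take real parts, reproducing the discrete energy identity and picking up $\|\delta_x^+ e^{k+1}\|^2 - \|\delta_x^+ e^k\|^2 + \lambda(|e_M^{k+1}|^2 - |e_M^k|^2)$ on the left. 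The sign-indefinite term $\lambda |e_M^k|^2$ must be absorbed: since $|e_M^k|^2 \le \|e^k\|\,\|\delta_x^+ e^k\|$ by a discrete Sobolev/Gagliardo--Nirenberg inequality on $Z_h^0$, it is dominated by $\eta\|\delta_x^+ e^k\|^2 + C_\eta \|e^k\|^2$, and the $\|e^k\|^2$ part is already controlled by the mass estimate.

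Then I would combine the mass-type and energy-type inequalities into a single discrete Gronwall argument: summing over time steps and using $\|\xi^k\| \lesssim C_\varepsilon(\tau^2 + \tau h) + h^2$ together with $e^0 = 0$, I obtain $\|e^{k}\|^2 + \|\delta_x^+ e^k\|^2 \le C\tau \sum_{m=0}^{k-1}(\|e^m\|^2 + \|\delta_x^+ e^m\|^2) + \big(C_\varepsilon(\tau^2+\tau h)+Ch^2\big)^2$, and discrete Gronwall gives the $H^1$ error bound; the $l^\infty$ bound follows from the discrete Sobolev inequality $\|e^k\|_\infty \lesssim \|e^k\|^{1/2}\|\delta_x^+ e^k\|^{1/2} + \|e^k\|$. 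A technical wrinkle is that the a priori $l^\infty$ bound on $U^k$ in Remark \ref{rmk:apriori} is what makes the local Lipschitz estimate legitimate, so strictly the induction should be run jointly: assume the error bounds at step $k$, deduce $\|U^{k+1}\|_\infty \le M_1 + 1$ for $h,\tau$ small, close the nonlinear estimate, and advance — but since the conservation laws already furnish an unconditional $l^\infty$ bound, this is only a bookkeeping point rather than a genuine obstacle.

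The main obstacle I anticipate is the interface point analysis: getting the truncation error at $x_M$ sharp (order $\tau^2 + h$ rather than worse) requires expanding $\frac{1}{h^2}(u_{l,M+1}^{k+1/2} - 2u_M^{k+1/2} + u_{M-1}^{k+1/2})$ and its right counterpart using one-sided Taylor expansions on $\Omega_l$ and $\Omega_r$ separately, then invoking the exact jump relation $u_x(0+)-u_x(0-)=\lambda u(0)$ to cancel the leading $O(1/h)$ mismatch against $\lambda b_M u_M^{k+1/2}$, leaving an $O(h)$ remainder; and then, crucially, observing that this single $O(h)$ node contributes only $O(h^{3/2})$ to $\|\xi^k\|$ and, paired with a $\tau$ from the time-accuracy, the $\tau h$ term in the final estimate. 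Handling the discrete jump condition $\delta_x U_{r,M}^{k+1/2} - \delta_x U_{l,M}^{k+1/2} = \lambda U_M^{k+1/2}$ correctly when eliminating the ghost values $U_{l,M+1}, U_{r,M-1}$ — exactly the manipulation in Remark following \eqref{Fdm_2-1} — is where the bulk of the careful work lies; everything else is the routine Crank--Nicolson energy machinery.
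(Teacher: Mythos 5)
Your overall architecture --- truncation error via one-sided Taylor expansions plus the jump condition at $x_M$, the error equation, an $L^2$ estimate from the imaginary part against $e^{k+1/2}$, an energy estimate from the real part against $\delta_t^+e^k$, absorption of the sign-indefinite $\lambda|e_M^k|^2$ by the discrete Sobolev inequality, and a discrete Gronwall argument --- coincides with the paper's. However, there are two genuine gaps.

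First, pairing the error equation with $\delta_t^+e^k$ and taking real parts leaves the residual term $\mathrm{Re}(R^k,\delta_t^+e^k)$ on the right-hand side, and you say nothing about how to control it. It cannot be handled by Cauchy--Schwarz, since $\|\delta_t^+e^k\|$ is not dominated by the $H^1$-type Gronwall functional (estimating it from the error equation reintroduces $\|\delta_x^2e^{k+1/2}\|$). The paper's remedy is Abel summation in time, $-2\tau\sum_{l\le k}(R^l,\delta_t^+e^l)=-2\,\mathrm{Re}\bigl((R^k,e^{k+1})-\tau\sum_{l}(\delta_t^+R^{l-1},e^l)\bigr)$ as in \eqref{eq:summ}, which is precisely why Lemma \ref{Truncation_error_2} must also establish bounds on $\delta_t^+R_j^k$, including the delicate one at $j=M$. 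The alternative of substituting the error equation for $\delta_t^+e^k$ and summing by parts in space (which works for the nonlinear term, and is presumably what your ``semi-$H^1$ Lipschitz bound'' is for) fails for the $R^k$ term, because $\delta_x^+R^k$ is only $O(1)$ across the interface node. Without the time-difference truncation estimates and the summation by parts in time, your energy inequality does not close.

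Second, your accounting of the interface node in the truncation norm undercuts the claimed rate. Taking $\|\xi^k\|\lesssim \tau^2+h^{3/2}$ (the single $O(h)$ node weighted by $h^{1/2}$) and feeding it into Gronwall yields only $O(h^{3/2})$ accuracy in space, not the $O(h^2)$ asserted in \eqref{convergence_2}; and the claim that the $h^{3/2}$ contribution ``paired with a $\tau$'' produces the $\tau h$ term is not a valid manipulation. The mechanism that recovers $h^2$ --- implicit in the paper's estimate of $\mathrm{Im}(R^k,e^{k+1/2})$ --- is to keep the \emph{full} weight $h$ on the interface node, $h|R_M^k|\,|e_M^{k+1/2}|\le |e_M^{k+1/2}|^2+h^2|R_M^k|^2$ with $h^2|R_M^k|^2\lesssim (C_\varepsilon h\tau^2+Ch^2)^2$, and to absorb the point value $|e_M^{k}|^2$ into the Gronwall functional $S^k$, which already contains it because of the $\lambda$-absorption you describe. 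You possess all the ingredients but never make this connection, and without it the theorem's $Ch^2$ spatial rate is out of reach.
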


%\textcolor{red}{
%\begin{rmk}
%Under the assumptions in Theorem \ref{convergence_fdm1}, %and \ref{convergence_fdm2}, 
%{\it a prior} $l^\infty$ bounds of the numerical solutions are available (Remark \ref{rmk:apriori}). Error estimates can be established with more general assumptions on $\mu$ and $p$, where the $l^\infty$ bounds can be controlled in a posterior way \cite{Baocai:2013}.
%\end{rmk}
%}
\subsection{Error estimates of CNFD}
Define the local truncation error $R^k\in Z^0_h$ ($k\ge0$) for CNFD \eqref{Fdm_2-1}-\eqref{Fdm_2-2} as
\begin{equation}\label{eq:lc-2}
R_j^k=i\delta_t^+u(x_j,t_k)+\left(\delta_{x}^2-\lambda b_j\right) \frac{u(x_j,t_k)+u(x_j,t_{k+1})}{2}-\mu\tilde{Q}(u(x_j,t_{k+1}),u(x_j,t_{k})),\quad \,j\in\mathcal{T}_M.
\end{equation}

\begin{lemma}\label{Truncation_error_2}
Under the assumption (A), we have the estimates for the local error $R^k\in Z_h^0$ ($0\leq k\leq T/\tau-1$) in \eqref{eq:lc-2}:
\begin{align*}
&|R_j^{k}|\leq C_{\varepsilon}\tau^{2}+Ch^2,\quad |\delta_t^+ R_j^{k}|\leq C_{\varepsilon}\tau^{2}+Ch^2,\quad j\in\mathcal{T}_M^0\backslash \{M-1\};\\
&|\delta_t^+ R_M^{k}|\leq C_{\varepsilon}\tau^{2}+Ch,\quad|R_M^{k}|\leq C_{\varepsilon}\tau^{2}+Ch,
\end{align*}
where $C_{\varepsilon}=M_1\max{(|\ln(\eps+M_1)|,\frac{1}{M_1^4},\varepsilon^{-2})}$. In addition, we have
\begin{equation}
\|R^k\|\lesssim C_{\varepsilon}\tau^{2}+Ch,\quad \|\delta_t^+ R^k\|\lesssim C_{\varepsilon}\tau^{2}+Ch.
\end{equation}
\end{lemma}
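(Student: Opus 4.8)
The plan is to perform a Taylor expansion of each term in the local truncation error \eqref{eq:lc-2} about the half-point $(x_j, t_{k+1/2})$, exploiting the regularity bundled into assumption (A) and the structure of the regularized nonlinearity $\tilde Q$. For the interior indices $j \in \mathcal{T}_M^0 \setminus \{M-1\}$, the relevant grid stencil $\{x_{j-1}, x_j, x_{j+1}\}$ stays on one side of the singularity (or is handled by the smoothness of $u$ away from $0$), so all the classical estimates apply: the Crank--Nicolson time discretization $i\delta_t^+ u$ matches $i\partial_t u(x_j,t_{k+1/2})$ up to $O(\tau^2)$ using $W^{3,\infty}$-in-time control, and the centered Laplacian $\delta_x^2 \frac{u(\cdot,t_k)+u(\cdot,t_{k+1})}{2}$ matches $\partial_{xx} u(x_j,t_{k+1/2})$ up to $O(h^2) + O(\tau^2)$ using $W^{4,\infty}$-in-space and $W^{2,\infty}$-in-time control. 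The term $\lambda b_j u^{k+1/2}$ vanishes for these $j$ since $b_j = 0$ there. For the nonlinear term I would use the integral representation in \eqref{eq:tildeQ}, namely $\tilde Q(z_1,z_2) = \frac12 \left(\int_0^1 q(\theta|z_1|^2 + (1-\theta)|z_2|^2)\,d\theta\right)(z_1+z_2)$, and show it approximates $q(|u(x_j,t_{k+1/2})|^2)\,u(x_j,t_{k+1/2})$ to $O(\tau^2)$; here the constant picks up the factor $|\ln(\eps + M_1)|$ from $|q| = |\ln(\eps+\sqrt{s})^2|$, the factor $1/M_1^4$ or $\eps^{-2}$ from $|q'(s)| = |s^{-1/2}/(\eps+\sqrt s)|$ and $|q''|$ when differentiating the integrand in a second-order Taylor remainder — this is exactly where $C_\varepsilon = M_1 \max(|\ln(\eps+M_1)|, M_1^{-4}, \eps^{-2})$ comes from. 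The difference-in-time bound $|\delta_t^+ R_j^k|$ follows the same expansions carried one order higher in time, using the $W^{4,\infty}([0,T])$ part of (A).

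The genuinely delicate point is the node $j = M$ (i.e. $x_M = 0$), where the solution is only $H^2(\Omega\setminus\{0\})$ and has a jump in $\partial_x u$ governed by \eqref{eq:jump}. Here the centered second difference $\delta_x^2 u(x_M,\cdot)$ does not see a $C^2$ function, so the naive $O(h^2)$ Taylor estimate is false. The fix is to expand $u(x_{M\pm1},t)$ from the left and right \emph{separately} using the one-sided regularity $u|_{[-a,0]}, u|_{[0,a]} \in W^{4,\infty}$: one gets
\begin{equation*}
u(x_{M+1},t) = u(0,t) + h\,\partial_x u(0+,t) + \tfrac{h^2}{2}\partial_{xx}u(0+,t) + O(h^3),
\end{equation*}
and similarly from the left with $\partial_x u(0-,t)$. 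Adding these, the first-order terms combine into $h(\partial_x u(0+,t) - \partial_x u(0-,t)) = \lambda h\, u(0,t)$ by \eqref{eq:jump}, so
\begin{equation*}
\delta_x^2 u(x_M,t) = \frac{u(x_{M+1}) - 2u(x_M) + u(x_{M-1})}{h^2} = \frac{\lambda u(0,t)}{h} + \frac{\partial_{xx}u(0+,t) + \partial_{xx}u(0-,t)}{2} + O(h).
\end{equation*}
The singular $\lambda u(0,t)/h$ piece is precisely cancelled by $-\lambda b_M u(x_M,t_{k+1/2}) = -\frac{\lambda}{h} u(0,t_{k+1/2})$ in $R_M^k$, while the averaged second derivative matches $\partial_{xx}u$ from the PDE (the PDE holds separately on each side, and the nonlinear term is continuous at $0$). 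What survives is an $O(h)$ consistency error at $x_M$ — not $O(h^2)$ — which accounts for the asymmetric bound $|R_M^k| \leq C_\varepsilon \tau^2 + Ch$ (and likewise for $\delta_t^+ R_M^k$ after differencing in time).

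Finally, for the aggregate norm bounds $\|R^k\| \lesssim C_\varepsilon \tau^2 + Ch$ and $\|\delta_t^+ R^k\| \lesssim C_\varepsilon \tau^2 + Ch$, I would combine the pointwise estimates: the single bad node contributes $h \cdot |R_M^k|^2 \lesssim h(C_\varepsilon\tau^2 + Ch)^2$ to $\|R^k\|^2$, which is even smaller than $O(h^2)$ after taking the square root, so the $O(h)$ (rather than $O(h^2)$) component is not degraded by the $\ell^2$ summation — actually one can note $\|R^k\| \lesssim C_\varepsilon\tau^2 + Ch^2 + \sqrt{h}\cdot Ch = C_\varepsilon\tau^2 + Ch^{3/2}$, but stating the weaker $Ch$ bound suffices for the theorem and keeps the statement clean; I would just use $\sqrt{h\cdot(2M)}\,\max_j|R_j^k| \lesssim$ (bounded length) $\cdot\max_j|R_j^k|$ together with the pointwise estimates, noting $M-1$ is included in the interior good set so only $j=M$ carries the $Ch$ term. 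The main obstacle is thus the careful one-sided expansion at $x_M$ and the verification that the discrete interface treatment in \eqref{Fdm_2-1} exactly annihilates the $O(1/h)$ singular term; everything else is bookkeeping of Taylor remainders weighted by the $\eps$- and $M_1$-dependent bounds on $q$, $q'$, $q''$.
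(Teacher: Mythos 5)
Your proposal follows essentially the same route as the paper's proof: standard Taylor expansions at interior nodes, one-sided expansions at $x_M=0$ whose first-order terms combine via the jump condition \eqref{eq:jump} so that the singular $\lambda u(0,t)/h$ piece is cancelled by the discrete potential $\lambda b_M u^{k+1/2}$, leaving an $O(h)$ residual from the mismatch of third derivatives across the interface, together with a midpoint-rule/Taylor analysis of the integral representation \eqref{eq:tildeQ} of $\tilde{Q}$ that produces the $\varepsilon$-dependent constant through bounds on $q$, $q'$, $q''$. Your further remark that the single interface node contributes only $O(h^{3/2})$ to $\|R^k\|$ is correct and matches how the paper later exploits the extra factor of $h$ at $j=M$ in the convergence argument.
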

\begin{proof}The case with $j\neq M$ is standard and we focus on $j=M$. For $x_M=0$, applying the Taylor expansion and the interface condition, under the assumption (A) ($u(x,t)$ is piece-wisely  regular), we have
\begin{align}
R_M^k=
&\frac{\tau^2}{16}\int_{0}^{1}\int_{-\theta}^{\theta}\left(\partial_x^2\partial_t^2 u(0+,t_{k+\frac{1}{2}}+\frac{s\tau}{2})+\partial_x^2\partial_t^2 u(0-,t_{k+\frac{1}{2}}+\frac{s\tau}{2})\right)
ds\,d\theta\nonumber\\
&+\frac{i\tau^2}{8}\int_{0}^{1}\int_{0}^{\theta}\int_{-s}^{s}\partial_t^3 u(0,t_{k+\frac{1}{2}}+\frac{\sigma\tau}{2})d\sigma\,ds\,d\theta\label{eq:RMk2}\\
&+\frac{h}{2}\int_{0}^{1}\int_0^{\theta}\int_0^s\sum_{m=0,1}\left(\partial_x^3 u(\sigma h,t_{k}+m\tau)-\partial_x^3 u(-\sigma h,t_{k}+m\tau)\right)\, d\sigma\,ds\,d\theta\nonumber
\\
&+\mu\frac{\tau^2}{8}\int_{0}^{1}\int_{-\theta}^{\theta}\partial_t^2 u_l(0,t_{k+\frac{1}{2}}+\frac{s\tau}{2})dsd\theta\,\cdot q_0^k +\mu u(0,t_{k+\frac{1}{2}})r_0^k,\nonumber\\
&+i\partial_{t}u(0,t_{k+1/2})+\frac{1}{2}(\partial_{xx}u(0+,t_{k+1/2})+\partial_{xx}u(0-,t_{1+1/2}))-\mu\ln(\eps+|u(0,t_{k+\frac{1}{2}}|)^2u(0,t_{k+\frac{1}{2}}),\nonumber
\end{align}
where $q_0^k=\int_0^1q(\sigma|u(0,t_{k+1})|^2+(1-\sigma)|u(0,t_{k})|^2)\,d\sigma$ and $r_0^k=q_0^k-\ln(\eps+|u(0,t_{k+\frac{1}{2}}|)^2$. 
Recalling $q(s)=\ln(\eps+\sqrt{s})^2$ \eqref{eq:tildeQ}, we have 
\begin{equation*}
|q_0^k|\leq 2\int_0^1\left|\ln(\eps+ \sqrt{\sigma|u(0,t_{k+1})|^2+(1-\sigma)|u(0,t_{k})|^2})\right|\,d\sigma
\leq 2\max\{\ln(\eps^{-1}),|\ln(\eps+M_1)|\}.
\end{equation*}
Choosing the middle-point rule for the integral in $q_0^k$, we have
$q_0^k=\tilde{q}_0^k+O((|u(0,t_k)|^2-u(0,t_{k+1})|^2)^2)=\tilde{q}_0^k+\max{(M_1^{-4},\varepsilon^{-2})}\tau^2$ with $\tilde{q}_0^k=q((|u(0,t_k)|^2+|u(0,t_{k+1})|^2)/2)$.  Using $(|u(0,t_k)|^2+|u(0,t_{k+1})|^2)/2=|u(0,t_{k+1/2})|^2+\max{(M_1^{-2},\varepsilon^{-1})}\tau^2$ (central difference), we have
%$\tilde{q}_0^k=q((|u(0,t_k)|^2+|u(0,t_{k+1})|^2)/2)-|u(0,t_{k+\frac{1}{2}})|^{p-1}=(\tau^2)$ and 
$r_0^k=q_0^k-\tilde{q}_0^k+\tilde{q}_0^k-\ln(\eps+|u(0,t_{k+\frac{1}{2}}|)^2=\max{(M_1^{-4},\varepsilon^{-2})}\tau^2$ (see Appendix \eqref{appendix_B} for details). Based on the assumption (A), the last line in \eqref{eq:RMk2} vanishes and we arrive at $|R_M^k|=C_{\varepsilon}\tau^{2}+Ch$. The case for $\delta_t^+R_M^k$ is similar (see \cite{Baocai:2012,Baocai:2013,Baosu:2019})(see Appendix \eqref{appendix_B} for details), and the detail is omitted here for brevity.
\end{proof}

Subtracting  \eqref{Fdm_2-1} from \eqref{eq:lc-2},  we have the error equation for CNFD as 
\begin{align}
&i\delta_t^+e_j^{k}=-\delta_x^2e_j^{k+1/2}+\lambda b_j e_j^{k+1/2}+\mu\hat{Q}_j^k+R_j^{k},\quad j\in\mathcal{T}_M,\,k\ge0,\label{err2_1}
\end{align}
 where $\hat{Q}^k\in Z_h^0$ and
\begin{equation}\label{eq:QI2}
\hat{Q}_j^k=\tilde{Q}\left(u(x_j,t_{k+1}),u(x_j,t_{k})\right)-\tilde{Q}(U_j^{k+1},U_j^{k})\quad j\in\mathcal{T}_M.
\end{equation}

For the nonlinear term $\hat{Q}_j^k$ ($k\ge0$, $j\in\mathcal{T}_M^1$), based on Remark \ref{rmk:apriori}, under the appropriate assumptions, we have the $l^\infty$ bounds of the numerical solution $U^k$ and the following estimates hold.
\begin{lemma}\label{Nonlinearity_1}
Under the assumption (A),  $\hat{Q}_j^k$ ($k\ge0$, $j\in\mathcal{T}_M^1$) \eqref{eq:QI2} satisfies
\begin{align}\label{eq:non}
|\hat{Q}_j^k|\leq C_{\varepsilon}\left(|e_j^{k+1}|+|e_j^k|\right),\, |\delta_x^+\hat{Q}_j^k|\leq C_{\varepsilon}\left(|\delta_x^+e_j^{k+1}|+|\delta_x^+e_j^k|+|e_j^{k+1}|+|e_j^k|\right),\, j\in\mathcal{T}_M^{0,1},\, k\ge0.
\end{align}

\end{lemma}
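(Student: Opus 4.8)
The plan is to estimate the nonlinear difference $\hat{Q}_j^k$ by exploiting the integral representation \eqref{eq:tildeQ} of $\tilde{Q}$, namely $\tilde{Q}(z_1,z_2)=\frac{1}{2}\left(\int_0^1 q(\theta|z_1|^2+(1-\theta)|z_2|^2)\,d\theta\right)(z_1+z_2)$ with $q(s)=\ln(\eps+\sqrt{s})^2$. First I would split $\hat{Q}_j^k$ additively: write it as the difference of the two factors, i.e.
\begin{equation*}
\hat{Q}_j^k=\frac{1}{2}P_j^k\big((u_j^{k+1}+u_j^k)-(U_j^{k+1}+U_j^k)\big)+\frac{1}{2}(P_j^k-\tilde P_j^k)(U_j^{k+1}+U_j^k),
\end{equation*}
where $P_j^k=\int_0^1 q(\theta|u_j^{k+1}|^2+(1-\theta)|u_j^k|^2)\,d\theta$, $\tilde P_j^k=\int_0^1 q(\theta|U_j^{k+1}|^2+(1-\theta)|U_j^k|^2)\,d\theta$, and $u_j^k$ abbreviates $u(x_j,t_k)$. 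The first term is immediately $\lesssim |P_j^k|\,(|e_j^{k+1}|+|e_j^k|)$, and $|P_j^k|$ is bounded by $2\max\{|\ln\eps|,|\ln(\eps+M_1)|\}\le C_\eps$ using assumption (A) and the same computation already performed for $|q_0^k|$ in the proof of Lemma \ref{Truncation_error_2}. For the second term, by Remark \ref{rmk:apriori} the factor $(U_j^{k+1}+U_j^k)$ is bounded by $C$, so it remains to show $|P_j^k-\tilde P_j^k|\le C_\eps(|e_j^{k+1}|+|e_j^k|)$; this follows by writing $P_j^k-\tilde P_j^k=\int_0^1\big(q(s_\theta)-q(\tilde s_\theta)\big)d\theta$ with $s_\theta=\theta|u_j^{k+1}|^2+(1-\theta)|u_j^k|^2$, $\tilde s_\theta=\theta|U_j^{k+1}|^2+(1-\theta)|U_j^k|^2$, and using that $q'(s)=\frac{1}{\sqrt{s}(\eps+\sqrt{s})}$ together with the elementary inequality $|q(s)-q(\tilde s)|\le \eps^{-2}|s-\tilde s|$ (which is uniform for $s,\tilde s\ge0$, since $|q'(s)|\le\eps^{-2}$ on $(0,\infty)$ — this is where the $\eps^{-2}$ in $C_\eps$ enters), and finally $|s_\theta-\tilde s_\theta|\lesssim (|u_j|+|U_j|)(|e_j^{k+1}|+|e_j^k|)\lesssim |e_j^{k+1}|+|e_j^k|$ by (A) and the a priori bound.

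For the discrete-gradient estimate $|\delta_x^+\hat{Q}_j^k|$, I would apply the discrete product rule $\delta_x^+(f_jg_j)=f_{j+1}\delta_x^+g_j+g_j\delta_x^+f_j$ to each of the two terms in the splitting above. This produces four kinds of factors: differences $\delta_x^+$ of $e^{k+1}$, $e^k$, of $P_j^k$, and of $\tilde P_j^k$ (equivalently of the numerical/exact solution, which absorb into $|\delta_x^+ e|+|e|$ terms via $\delta_x^+ U_j=\delta_x^+ u_j-\delta_x^+ e_j$ and (A)), each multiplied by an $l^\infty$-bounded or $C_\eps$-bounded factor. The only genuinely new quantity is $\delta_x^+(P_j^k-\tilde P_j^k)$, and $\delta_x^+ P_j^k$, $\delta_x^+\tilde P_j^k$ separately. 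For these I differentiate under the $\int_0^1 d\theta$ sign and use the chain rule together with $q'(s)=\frac{1}{\sqrt s(\eps+\sqrt s)}$; the resulting bound involves $\delta_x^+$ of $|u_j^{k+1}|^2$, $|u_j^k|^2$, $|U_j^{k+1}|^2$, $|U_j^k|^2$. Expanding $\delta_x^+|v_j|^2=\overline{v_{j+1}}\,\delta_x^+v_j+v_j\,\overline{\delta_x^+ v_j}$ and collecting, one obtains the claimed majorant $C_\eps(|\delta_x^+ e_j^{k+1}|+|\delta_x^+ e_j^k|+|e_j^{k+1}|+|e_j^k|)$, again with the $\eps^{-2}$ (and the $M_1^{-4}$) coming from bounding $q'$ and $q''$-type expressions, so the constant is exactly the $C_\eps$ displayed in the statement.

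The main obstacle I anticipate is bookkeeping rather than a conceptual difficulty: one must be careful that near $s=0$ the function $q(s)=\ln(\eps+\sqrt s)^2$ has $q'(s)=\frac{1}{\sqrt s(\eps+\sqrt s)}\to+\infty$ as $s\to0^+$, so the naive bound $\sup|q'|=\infty$ fails; the correct move is to use the Lipschitz-type bound on $q$ in the variable $\sqrt s$, or equivalently the uniform estimate $|q(s)-q(\tilde s)|\le \eps^{-2}|s-\tilde s|$ valid because $|\sqrt s-\sqrt{\tilde s}|\le|s-\tilde s|^{1/2}$ combined with $|\ln(\eps+r)^2-\ln(\eps+\tilde r)^2|\le \frac{2}{\eps}|r-\tilde r|$ — which after squaring out gives the stated $\eps^{-2}$ dependence. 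A second, minor point is that the estimate is asserted on $\mathcal{T}_M^{0,1}$ (i.e. excluding the interface index $j=M$), so for the $\delta_x^+$ bound at $j=M-1$ one needs the a priori $l^\infty$ bound at $x_M$ as well, which Remark \ref{rmk:apriori} supplies; no jump condition for the nonlinearity is needed since $\mu u\ln(\eps+|u|)^2$ is continuous across $x=0$ under (A). Once these per-node bounds are in place, the lemma follows by taking absolute values in the split expressions and invoking (A) and Remark \ref{rmk:apriori} term by term.
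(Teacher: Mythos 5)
Your overall architecture — splitting $\hat Q_j^k$ into a ``coefficient times error'' term plus a ``difference of coefficients times numerical solution'' term, bounding the first factor by $2\max\{|\ln\eps|,|\ln(\eps+M_1)|\}$ as in Lemma~\ref{Truncation_error_2}, invoking Remark~\ref{rmk:apriori} for the $l^\infty$ bound of $U^{k+1}+U^k$, and using the discrete product rule for $\delta_x^+\hat Q_j^k$ — is exactly the paper's (the second-term computation appears in its appendix on $\hat Q_j^k$). The gap is in the one estimate that carries the lemma, the bound on $P_j^k-\tilde P_j^k$. The inequality you rely on, $|q(s)-q(\tilde s)|\le\eps^{-2}|s-\tilde s|$ for all $s,\tilde s\ge0$, is false: with $q(s)=2\ln(\eps+\sqrt s)$ one has $q'(s)=\frac{1}{\sqrt s\,(\eps+\sqrt s)}\to+\infty$ as $s\to0^+$ (as you yourself note two paragraphs later, so the claim $|q'(s)|\le\eps^{-2}$ on $(0,\infty)$ cannot stand), and concretely $\tilde s=0$, $s=\eps^2$ gives $q(s)-q(\tilde s)=2\ln 2>1=\eps^{-2}|s-\tilde s|$. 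Your proposed repair via $|\sqrt s-\sqrt{\tilde s}|\le|s-\tilde s|^{1/2}$ does not close the gap either: it yields only $|q(s_\theta)-q(\tilde s_\theta)|\le\frac{2}{\eps}|s_\theta-\tilde s_\theta|^{1/2}\lesssim\eps^{-1}\left(|e_j^{k+1}|+|e_j^k|\right)^{1/2}$, and a square root of the error is not of the claimed form $C_\eps\left(|e_j^{k+1}|+|e_j^k|\right)$; it would wreck the quadratic bookkeeping feeding the Gronwall argument in the proof of Theorem~\ref{convergence_fdm1}.

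The correct step, which the paper uses implicitly, is to be Lipschitz in the variable $r=\sqrt s$ \emph{and} to bound the increment of $\sqrt{s_\theta}$ directly by the errors rather than through $|s_\theta-\tilde s_\theta|$: since $\sqrt{s_\theta}=\bigl\|(\sqrt\theta\,u_j^{k+1},\sqrt{1-\theta}\,u_j^k)\bigr\|_{2}$ and $\sqrt{\tilde s_\theta}=\bigl\|(\sqrt\theta\,U_j^{k+1},\sqrt{1-\theta}\,U_j^k)\bigr\|_{2}$, the reverse triangle inequality gives $|\sqrt{s_\theta}-\sqrt{\tilde s_\theta}|\le\sqrt\theta\,|e_j^{k+1}|+\sqrt{1-\theta}\,|e_j^k|$, whence $|q(s_\theta)-q(\tilde s_\theta)|=2\,|\ln(\eps+\sqrt{s_\theta})-\ln(\eps+\sqrt{\tilde s_\theta})|\le\frac{2}{\eps}\left(|e_j^{k+1}|+|e_j^k|\right)$; this is precisely the $\max(M_1/\eps,1)$ factor in the paper's appendix. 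With this substitution your first estimate goes through, and the same replacement must be made wherever you ``differentiate under the $\int_0^1 d\theta$ sign with $q'$'' in the $\delta_x^+$ part, since any argument that prices the increment of $q$ by $\sup|q'|$ in the variable $s$ fails near nodes where the solution is small.
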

\begin{proof}
Under the assumptions of Lemma \ref{Nonlinearity_1},  there exists a constant $C_0>0$ independent of $h$, $\tau$ and $k$ such that $\|U^k\|_\infty\leq C_0,|u(x_j,t_k)|\leq C_0$ ($j\in\mathcal{T}_M^0$, $k\ge0$).

Recalling \eqref{eq:tildeQ},  $\hat{Q}_j^k$ \eqref{eq:QI2}  ($k\ge0$) can be written as
\begin{align*}
\hat{Q}_j^k=&
\frac{1}{2}\int_0^1q(\theta|u(x_j,t_{k+1})|^2+(1-\theta)|u(x_j,t_{k})|^2)\,d\theta\, (u(x_j,t_{k+1})+u(x_j,t_{k}))\\
&-\frac{1}{2}\int_0^1q(\theta|U_j^{k+1}|^2+(1-\theta)|U_j^{k}|^2)\,d\theta\, (U_j^{k+1}+U_j^{k})
\\
=&\frac{1}{2}\int_0^1q(\theta |u(x_j,t_{k+1})|^2+(1-\theta)|u(x_j,t_{k})|^2)\,d\theta \left(e_j^{k+1}+e_j^k\right)\\
&+\frac{1}{2}\int_0^1\left(q(\theta |u(x_j,t_{k+1})|^2+(1-\theta)|u(x_j,t_{k})|^2)-q(\theta|U_j^{k+1}|^2+(1-\theta)|U_j^{k}|^2)\right)\,d\theta\left(U_j^{k+1}+U_j^{k}\right).
\end{align*}
When $q(s)=\ln(\eps+\sqrt{s})$ \eqref{eq:tildeQ}. Combing the observations above and the $l^\infty$ bounds of $U^k$ and $u(\cdot,t_k)$, the first estimates in Lemma \ref{Nonlinearity_1} can be get by the similar procedure in Lemma \ref{Truncation_error_2}.

For the bound on $\delta_x^+\hat{Q}_j^k$, (see Appendix \eqref{appendix_C} for details) The detail is omitted here for brevity.
\end{proof}
Now, we present the error analysis.
% The procedure is similar to the proof of \eqref{convergence_1} in Theorem \ref{convergence_fdm1} and we shall sketch the proof below.
\begin{proof}{\bf{Proof of \eqref{convergence_2} in Theorem \ref{convergence_fdm1}:}}
Computing the inner product of \eqref{err2_1} with $e^{k+1/2}\in Z_h^0$, and taking the imaginary part, 
%\begin{align*}
%&\text{Re}(\delta_t^+e^{k+1/2},e^{k+1/2})\\
%&=-\text{Im}(\delta_{x}^2e^{k+1/2},e^{k+1/2})+ \lambda \text{Im}(be^{k+1/2},e^{k+1/2})+\mu \text{Im}(\hat{Q}(e^k,e^{k+1}),e^{k+1/2})+\text{Im}(R^{k+1/2},e^{k+1/2}),
%\end{align*}
using  Lemmas \ref{delta_x} and \ref{Nonlinearity_1}, we obtain
\begin{align}\label{eq:err-2:1}
\|e^{k+1}\|^2-\|e^k\|^2=&2\mu\tau \text{Im}(\hat{Q}(e^k,e^{k+1}),e^{k+1/2})+2\tau\text{Im}(R^{k},e^{k+1/2})\\
\lesssim&C_{\eps}^2\tau\left(\|e^{k+1}\|^2+\|e^k\|^2+|e_M^{k+1}|^2+|e_M^k|^2\right)+C\tau( C_{\eps}\tau^2+h^2)^2,\nonumber
\end{align}
where we have used the Cauchy inequality to find that $|\text{Im}(R^{k},e^{k+1/2})|\leq h
\sum\limits_{j\in\mathcal{T}_M^1}|R_j^{k}|\cdot|e_j^{k+1/2}|+h|R_M^{k}|\cdot|e_M^{k+1/2}|\lesssim \|e^{k+1}\|^2+\|e^k\|^2 
+ |e_M^{k+1}|^2+|e_M^k|^2+h^2|R_M^k|^2+C_{\varepsilon}^2\tau^{4}+Ch^4$.
%\lesssim
%\tau\left(\|e^{k+1}\|^2+\|e^k\|^2+ \|R^k\|^2\right)
%since
%\begin{align*}
%\text{Im}(R^{k+1/2},e^{k+1/2})&=\text{Im}\left( h\sum_{j=1,j\neq M}^{2M-1}R_j^{k+1/2}\overline{e_j^{k+1/2}}+ h R_M^{k+1/2}\overline{e_M^{k+1/2}}                      \right)\\
%&\leq \frac{h}{2}\sum_{j=1}^{2M-1}|R_j^{k+1/2}|^2+  \frac{h}{2}\sum_{j=1}^{2M-1}|e_j^{k+1/2}|^2+ h^2|R_M^{k+1/2}|^2+\frac{1}{2}|e_M^{k+1/2}|^2     \\
%&\leq \frac{1}{2}\|e^{k+1/2}\|^2+\frac{1}{2}\|e^{k+1/2}\|_{\infty}^2+\frac{h}{2}\sum_{j=1}^{2M-1}|R_j^{k+1/2}|^2+  h^2|R_M^{k+1/2}|^2,
%\end{align*}
%thus, we can get
%\begin{align}\label{error_equation_3}
%&\|e^{k+1}\|^2-\|e^k\|^2\leq C\tau\left(\|e^{k+1/2}\|^2+\|e^{k+1/2}\|_{\infty}^2+\tau^4 +h^4 \right),
%\end{align}
On the other hand, computing the inner product of \eqref{err2_1} with $e^{k+1}-e^{k}\in Z_h^0$, and taking the real part, we have 
\begin{align*}
\text{Im}(\delta_t^+e^{k},e^{k+1}-e^k)
&=-\text{Re}(\delta_{x}^2e^{k+1/2},e^{k+1}-e^k)+ \lambda \text{Re}(b\circ e^{k+1/2},e^{k+1}-e^k)\\
&+\mu \text{Re}(\hat{Q}^k,e^{k+1}-e^k)+\text{Re}(R^{k},e^{k+1}-e^k),
\end{align*}
where $b\circ e^{k+1/2}\in Z_h^0$ means the component-wise multiplication, i.e.  $(b\circ e^{k+1/2})_j=b_je^{k+1/2}_j$.
Using  Lemma \ref{delta_x} and recalling $\delta_t^+e^{k+1/2}=-i(-\delta_{x}^2e^{k+1/2}+\lambda b\circ e^{k+1/2}+\mu\hat{Q}_j^k+R^{k})$, we get
\begin{align}\label{eq:ep1}
&\|\delta_x^+ e^{k+1}\|^2-\|\delta_x^+ e^k\|^2 +\lambda( |e_M^{k+1}|^2-|e_M^k|^2)\\
&=2\tau \text{Im}\left(\mu\hat{Q}^k, -\delta_{x}^2e^{k+1/2}+\lambda b\circ e^{k+1/2}+\mu\hat{Q}^k+R^{k}  \right)
-2\tau\text{Re}(R^k,\delta_t^+e^{k+1/2}).\nonumber
\end{align}
Following the proof of Theorem \ref{convergence_fdm1}, using  Lemmas \ref{delta_x} and \ref{Nonlinearity_1}, we can
derive
%\begin{align*}
%&\|\delta_x^+ e^{k+1}\|^2-\|\delta_x^+ e^k\|^2 +\lambda( |e_M^{k+1}|^2-|e_M^k|^2)\\
%&\leq C\tau\left(  \|\delta_x^+ e^{k+1/2}\|^2
%+\| e^{k+1/2}\|^2  \right)
%+C\tau \text{Im}\left( \mu\hat{Q}(e^k,e^{k+1}), \lambda be^{k+1/2}+R^{k+1/2}  \right)
%+ C\tau \text{Re}(R^{k+1/2},\delta_t e^k)
%\end{align*}
%since 
\begin{align*}
&\text{Im}\left(\mu\hat{Q}^k,-\delta_x^2e^{k+1/2}\right)\lesssim  C_{\eps}^2(\|\delta_x^+e^{k+1}\|^2+\|\delta_x^+e^k\|^2),\quad\text{Im}(\mu\hat{Q}^k, \lambda b\circ e^{k+1/2})
\lesssim C_{\eps}^2(|e_M^k|^2+|e_M^{k+1}|^2),\\
& \left|\left(\mu\hat{Q}^k,R^{k}   \right)\right|
\lesssim h\sum\limits_{j\in\mathcal{T}_M^1}|\hat{Q}_j^k|\cdot|R_j^k|+h|R_M^k|\cdot|\hat{Q}_M^k|
M_1M_1\lesssim C_{\eps}^2\left( \|e^k\|^2+\|e^{k+1}\|^2+|e_M^k|^2+|e_M^{k+1}|^2 \right)\\
&\quad\quad\quad\quad\quad\quad+C(C_{\eps}\tau^2++h^2)^2.
\end{align*}
Introducing
$S^k=\|\delta_x e^k\|^2+(2+4\lambda^2)\|e^k\|^2+\lambda|e_M^{k}|^2$ ($k\ge0$),  by the discrete Sobolev inequality,  we have 
\begin{align}\label{eq:Sk2}
&S^k\ge\frac{1}{2}\|\delta_x e^k\|^2+\|e^k\|^2+\frac{1}{2}|e_M^k|^2\quad k\ge0.
\end{align}
$(2+4\lambda^2)\eqref{eq:err-2:1} +\eqref{eq:ep1}$ leads to
\begin{equation}\label{eq:enrg}
S^{k+1}-S^k\leq -2\tau\text{Re}(R^k,\delta_t^+e^{k})
+C_{\eps}^2\tau (S^k+S^{k+1})+C\tau(C_{\eps}\tau^2+h^2)^2,\quad k\ge0.
\end{equation}
On the other hand, since $e^0=0$, summation-by-parts in time  (similar to the proof for CNFD) leads to
\begin{align}
-2\tau\text{Re}\sum\limits_{l=0}^k(R^l,\delta_t^+e^l)=&-2\,\text{Re}\left(  
(R^{k},e^{k+1})-(R^0,e^0)-\tau\sum_{l=1}^{k}(\delta_t^+R^{l-1},e^l)\right))\nonumber\\
\leq& 2h\left(\sum\limits_{j\in\mathcal{T}_M^1}(|R_j^k|\cdot|e_j^{k+1}|+|R_M^k|\cdot|e_M^{k+1}|\right)\nonumber\\
&\,+2h\tau
\left(\sum\limits_{l=1}^k\sum\limits_{j\in\mathcal{T}_M^1}|\delta_t^+R_j^{l-1}|\cdot|e_j^{l}|+|\delta_t^+R_M^{l-1}|\cdot|e_M^{l}|\right)\nonumber\\
\leq&\, \eta (\|e^{k+1}\|^2+|e_M^{k+1}|^2)+C_{\eps}^2\tau\sum\limits_{l=1}^k\left(\|e^l\|^2+|e_M^l|^2\right)+C_\eta(C_{\eps}\tau^2+h^2)^2,\label{eq:summ}
\end{align}
where $\eta>0$ can be chosen arbitrary and we have used the Cauchy inequality and the fact that
$h\sum\limits_{j\in\mathcal{T}_M^1}|\delta_t^+R_j^{l}|^2+h^2|R_M^l|^2\lesssim C(C_{\eps}\tau^2+h^2)^2$ (Lemma \ref{Truncation_error_2}).
Summing \eqref{eq:enrg} together for indices $0,1,\ldots,k$, combining \eqref{eq:summ}  and \eqref{eq:Sk2}, we can derive
%\begin{align*}
%S^{k+1}\leq C\tau\sum_{l=1}^{k+1}S^l+(\tau^2+h^2)^2+\frac{1}{4}S^{k+1},
%\end{align*}
%and
\begin{align*}
S^{k+1}\lesssim C_{\eps}^2\tau\sum_{l=1}^{k+1}S^l+C(C_{\eps}\tau^2+h^2)^2,\quad k\ge0.
\end{align*}
Using the Gronwall inequality, for sufficiently small $
\tau$, we can get $S^{k+1}\lesssim C(C_{\eps}\tau^2+h^2)^2\lesssim e^{TC_{\eps}^2}(C_{\eps}\tau^2+h^2)^2$ and the estimates \eqref{convergence_2} follow.
\end{proof}

%%%%%%%%%%%%%%%%%%%%%%%%%%%%%%%%%%%%%%%%%%%%%%%%%%%%%%%%%%%%%%%%
% =======================================experiments===========================
\section{Numerical experiments}
\label{sec:experiments}
In this section, we present numerical tests about accuracy and efficiency of the proposed finite difference schemes (CNFD) and perform some experiments concerning the conservative properties and convergence rate of the regularized model \eqref{RLSE}.
%of the solitary waves 
For numerical purpose, we  focus on the  equations \eqref{problem_1d}, and the numerical methods are programmed by using Matlab(The nonlinear procedure in algorithms is solved by the fixed point iteration).
\subsection{Spatial/temporal resolution} We firstly test the temporal and spatial discretization error of the proposed methods for CNFD. The initial data  for the problem \eqref{problem_1d} is considered, i.e.
the set of soliton initial data ($\lambda=1,\mu=-1$)
\begin{eqnarray} \label{ini:1}
	\begin{aligned}
		&u_0(x)=Q_{\omega}(x):=e^{\frac{\omega+1}{2}}e^{-\frac{1}{2}(\abs{x}-\frac{\lambda}{2})^2},
	\end{aligned}
\end{eqnarray}
where $\omega=1$ and the exact solution reads $u(x,t)=e^{i\omega t}Q_{\omega}(x)$.
The computational domain is set as $\Omega\times [0,T]$ ($\Omega=[-12,12]$, $T=1$) with the homogeneous Dirichlet boundary conditions. The discretization is prescribed in section \ref{sec:fds} and \eqref{eq:cnfd}

Denote $u_{h,\tau}^{n}$ as the numerical solution obtained by the proposed numerical method with mesh size $h$ and time step $\tau$ and $e_j^n=(u_{h,\tau})_{j}^{n}-u(x_j,t_n)$. In order to quantify the numerical errors, we introduce the $L^2,L^{\infty}$ and $H^1$ errors at time $t_n$ as
\begin{equation*}
	e_{h,\tau}(t_n)=\|e^n\|,\quad
	e_{\infty,h,\tau}(t_n)=\|e^n\|_{\infty},\quad
	e_{x,h,\tau}(t_n)=\sqrt{\|e^n\|^2+\|\delta_x^+ e^n\|^2}.
\end{equation*}

Here, the exact solution $(u(h,\tau))_j^n$ is obtained numerically by CNFD \eqref{eq:cnfd} with a very small time step $\tau_e$ and very small mesh size $h_e$ under corresponding $\varepsilon$. 

%%%%%%%%%%%%%%fdm2-s%%%%%%
\begin{table}[!htbp]
	{\footnotesize
		\caption{Spatial error of $e_{h,\tau}(T)$ at $T=1$ under $\tau_e=0.1/2^{12},h_e=1/2^{12}$ with initial value \ref{ini:1}.}\label{tab:spatial1}
		\begin{center}
			\begin{tabular}{|c|c|c|c|c|c|c|c|}
				\hline
				$e_{h,\tau}(T)$ &$h_0=0.5$ &$h_0/2$ &$h_0/2^2$& $h_0/2^3$& $h_0/2^4$ & $h_0/2^5$& $h_0/2^6$\\
				\hline
				%$p=1.5$&     4.01E-2 &  1.00E-2  & 2.50E-3 & 6.27E-4& 1.57E-4 &3.92E-5 &9.69E-6   \\
				$\varepsilon=0.0025$&       8.28E-02 &  2.05E-2  & 5.12E-3 & 1.28E-3& 3.20E-4 &7.99E-5 &2.00E-5   \\
				order&-&2.01&2.00&2.00&2.00&2.00&2.00\\
				$\varepsilon/4$&     8.31E-2   &  2.06E-2  & 5.14E-3 & 1.28E-3& 3.21E-4 &8.01E-5 &2.00E-5     \\
				order&-&2.01&2.00&2.00&2.00&2.00&2.00\\
				$\varepsilon/4^2$&       8.32E-2     &    2.06E-2    & 5.14E-3 & 1.28E-3& 3.21E-4 &8.02E-5 &2.00E-05  \\
				order&-&2.01&2.00&2.00&2.00&2.00&2.00\\
				$\varepsilon/4^3$&       8.33E-2    &  2.06E-2     & 5.14E-3 & 1.28E-3& 3.21E-4  &8.02E-5 & 2.00E-5  \\
				order&-&2.01&2.00&2.00&2.00&2.00&2.00\\
				\hline
			\end{tabular}
		\end{center}
	}
\end{table}

\begin{table}[!htbp]
	{\footnotesize
		\caption{Spatial error of $e_{x,h,\tau}(T)$ at $T=1$ under $\tau_e=0.1/2^{12},h_e=1/2^{12}$ with initial value \ref{ini:1}.}\label{tab:spatial2}
		\begin{center}
			\begin{tabular}{|c|c|c|c|c|c|c|c|}
				\hline
				$e_{x,h,\tau}(T)$ &$h_0=0.5$ &$h_0/2$ &$h_0/2^2$& $h_0/2^3$& $h_0/2^4$ & $h_0/2^5$& $h_0/2^6$\\
				\hline
				%$p=1.5$&     4.01E-2 &  1.00E-2  & 2.50E-3 & 6.27E-4& 1.57E-4 &3.92E-5 &9.69E-6   \\
				$\varepsilon=0.0025$&      2.00E-1 &  5.02E-2  & 1.28E-2 & 3.17E-3& 7.93E-4 &1.98E-4 &4.96E-5   \\
				order&-&1.99&1.98&2.01&2.00&2.00&2.00\\
				$\varepsilon/4$&    2.01E-1    &  5.03E-2  & 1.28E-2 & 3.18E-3& 7.95E-4 &2.00E-4  &5.01E-05     \\
				order&-&2.00&1.98&2.01&2.00&1.99&2.00\\
				$\varepsilon/4^2$&       2.02E-1    &  5.04E-2     & 1.28E-2 & 3.21E-3& 8.01E-4 &2.00E-4 &4.98E-5  \\
				order&-&2.00&1.98&1.99&2.00&2.00&2.01\\
				$\varepsilon/4^3$& 2.02E-1    &  5.04E-2     & 1.28E-2 & 3.21E-3& 7.99E-4 &1.99E-4 &4.95E-5  \\
				order&-&2.00&1.98&1.99&2.00&2.01&2.00\\
				\hline
			\end{tabular}
		\end{center}
	}
\end{table}
\begin{table}[!htbp]
	{\footnotesize
		\caption{Spatial error of $e_{\infty,h,\tau}(T)$ at $T=1$ under $\tau_e=0.1/2^{12},h_e=1/2^{12}$ with initial value \ref{ini:1}.}\label{tab:spatial3}
		\begin{center}
			\begin{tabular}{|c|c|c|c|c|c|c|c|}
				\hline
				$e_{\infty,h,\tau}(T)$ &$h_0=0.5$ &$h_0/2$ &$h_0/2^2$& $h_0/2^3$& $h_0/2^4$ & $h_0/2^5$& $h_0/2^6$\\
				\hline
				%$p=1.5$&     4.01E-2 &  1.00E-2  & 2.50E-3 & 6.27E-4& 1.57E-4 &3.92E-5 &9.69E-6   \\
				$\varepsilon=0.0025$&       5.06E-2 &  1.28E-2  & 3.23E-3 & 8.11E-4& 2.04E-4  &5.11E-5 &1.28E-5   \\
				order&-&1.98&1.99&1.99&1.99&2.00&2.00\\
				$\varepsilon/4$&     5.07E-2    &  1.28E-2  & 3.23E-3 & 8.12E-4& 2.02E-4 &5.09E-5 &1.27E-5     \\
				order&-&1.98&1.99&1.99&2.01&1.99&2.00\\
				$\varepsilon/4^2$&       5.07E-2    &  1.28E-2     & 3.22E-3 & 8.12E-4& 2.04E-4 &5.12E-5 &1.28E-5  \\
				order&-&1.98&1.99&1.99&1.99&2.00&2.00\\
				$\varepsilon/4^3$&   5.07E-2   &  1.28E-2     & 3.22E-3  & 8.18E-4& 2.06E-4 &5.09E-5 &1.28E-5  \\
				order&-&1.98&1.99&1.98&1.99&2.02&2.00\\
				\hline
			\end{tabular}
		\end{center}
	}
\end{table}

%%%%%%time
\begin{table}[!htbp]
	{\footnotesize
		\caption{Temporal error of $e_{h,\tau}(T)$ at  $T=1$ under $\tau_e=0.1/2^{12},h_e=1/2^{12}$ with initial value \ref{ini:1}.}\label{tab:temporal1}
		\begin{center}
			\begin{tabular}{|c|c|c|c|c|c|c|c|c|}
				\hline
				$e_{h,\tau}(T)$ &$\tau_0=0.05$ &$\tau_0/2$ &$\tau_0/2^2$& $\tau_0/2^3$ &$\tau_0/2^4$ &$\tau_0/2^5$&$\tau_0/2^6$&$\tau_0/2^7$\\
				\hline
				% $p=1.5$&     4.88E-2 &  1.27E-2  & 3.20E-3 & 8.07E-4 & 2.02E-4  &5.03E-5 &1.25E-5    \\
				$\varepsilon=0.0125$&       1.00E-3  & 2.53E-4 &  6.43E-5  & 1.68E-5 &  4.77E-6  & 1.57E-6 & 5.97E-7  & 2.51E-7   \\
				order&-&1.99 & 1.98 &  1.93&  1.82 &  1.61 &  1.39&  1.25\\
				$\varepsilon/2$&  9.64E-4 &  2.42E-4  & 6.08E-5 & 1.54E-5 &4.02E-6  &1.13E-6 &3.59E-7&1.36E-7    \\
				order&-&1.99&1.99&1.98&1.94&1.84&1.65&1.41\\
				
				$\varepsilon/2^2$&  9.45E-4 &  2.37E-4 &  5.93E-5  & 1.49E-5 &  3.76E-6  & 9.75E-7 & 2.66E-7 &  8.41E-8   \\
				order&-&1.99&   2.00&   1.99&   1.98&   1.95&   1.87&  1.66\\

				$\varepsilon/2^3$&       9.37E-4 &  2.34E-4 &  5.86E-5 &  1.47E-5 &  3.68E-6  & 9.30E-7 &2.36E-7  & 6.50E-8  \\
				order&-&2.00 &  2.00 &2.00&  2.00  & 1.98&   1.98& 1.86\\

				$\varepsilon/2^4$& 9.33E-4 &  2.33E-4 &  5.83E-5 &  1.46E-5 &  3.65E-6 &  9.16E-7 &2.27E-7  & 5.92E-8    \\
				order&-&2.00&  2.00 &  2.00&   2.00&   1.99&   2.01&  1.94\\
				
				$\varepsilon/2^5$&       9.31E-4  & 2.33E-4  & 5.82E-5 &  1.46E-5 &  3.64E-6 &  9.11E-7 & 2.25E-7 &  5.76E-8   \\
				order&-&2.00&  2.00&  2.00&   2.00&   2.00&   2.02& 1.97\\
				$\varepsilon/2^6$& 9.30E-4  & 2.33E-4  & 5.82E-5 &  1.45E-5 &  3.63E-6 &  9.10E-7 & 2.24E-7 &  5.72E-8  \\
				order&-&2.00 &  2.00 &  2.00 & 2.00&   2.00&   2.02& 1.97\\
				$\varepsilon/2^7$&       9.30E-4 &  2.32E-4 &  5.81E-5 &  1.45E-5 &  3.63E-6 &  9.09E-7 & 2.24E-7 &  5.71E-8    \\
				order&-&2.00&2.00&2.00&2.00&2.00&2.02&1.97\\
				\hline
			\end{tabular}
		\end{center}
	}
\end{table}
\begin{table}[!htbp]
	{\footnotesize
		\caption{Temporal error of $e_{x,h,\tau}(T)$ at  $T=1$ under $\tau_e=0.1/2^{12},h_e=1/2^{12}$ with initial value \ref{ini:1}.}\label{tab:temporal2}
		\begin{center}
			\begin{tabular}{|c|c|c|c|c|c|c|c|c|}
				\hline
				$e_{x,h,\tau}(T)$ &$\tau_0=0.05$ &$\tau_0/2$ &$\tau_0/2^2$& $\tau_0/2^3$ &$\tau_0/2^4$ &$\tau_0/2^5$&$\tau_0/2^6$&$\tau_0/2^7$\\
				\hline
				% $p=1.5$&     4.88E-2 &  1.27E-2  & 3.20E-3 & 8.07E-4 & 2.02E-4  &5.03E-5 &1.25E-5    \\
				$\varepsilon=0.0125$&      1.35E-3 &  \bf{3.72E-4} &  1.23E-4 &  5.52E-5 &  3.04E-5 &  1.70E-5 & 9.54E-6 &  5.40E-6  \\
				order&-&\bf{1.86} &  1.60 &  1.15&   0.86  & 0.84  & 0.83 & 0.82\\
				
				$\varepsilon/2$&  1.20E-3  & 3.14E-4 &  \bf{8.85E-5} &  3.18E-5 &  1.57E-5 &  8.54E-6 & 4.77E-6 &  2.70E-6    \\
				order&-&1.94 & \bf{1.83} &  1.48&   1.02&   0.88  & 0.84  & 0.82\\
				
				$\varepsilon/2^2$&  1.13E-3 &  2.87E-4 &  7.48E-5 & \bf{ 2.19E-5}  & 8.66E-6  & 4.36E-6 & 2.39E-6 &  1.35E-6  \\
				order&-& 1.97&  1.94&  \bf{ 1.77}&   1.34&   0.99  & 0.87 & 0.83\\

				$\varepsilon/2^3$&       1.10E-3 &  2.76E-4 &  6.98E-5 &  1.83E-5 &  \bf{5.64E-6} &  2.37E-6 & 1.21E-6 &  6.72E-7 \\
				order&-& 2.00 &  1.98&  1.93&   \bf{1.70}&   1.25&   0.96&  0.85\\

				$\varepsilon/2^4$&   1.08E-3 &  2.72E-4&   6.81E-5 &  1.73E-5 &  4.61E-6 &  \bf{1.50E-6} & 6.43E-7 &  3.36E-7   \\
				order&-&2.00 & 1.99&  1.98&   1.90&   \bf{1.62}&   1.22&  0.94\\
				
				$\varepsilon/2^5$&       1.08E-3 &  2.70E-4 &  6.76E-5 &  1.70E-5 &  4.32E-6 &  1.18E-6 &\bf{3.89E-7} &  1.73E-7  \\
				order&-&2.00&   2.00 &  1.99&   1.97&   1.87&  \bf{1.60}& 1.17\\
				$\varepsilon/2^6$& 1.08E-3 &  2.70E-4 &  6.75E-5 &  1.69E-5 &  4.25E-6 &  1.09E-6 & 2.95E-7 &  \bf{1.01E-7} \\
				order&-&2.00&  2.00&   2.00&   1.99&   1.97&   1.88&  \bf{1.55}\\
				$\varepsilon/2^7$& 1.08E-3 &  2.70E-4 &  6.75E-5 &  1.69E-5 &  4.23E-6 &  1.06E-6 & 2.68E-7 &  7.47E-8    \\
				order&-&2.00&  2.00&  2.00 &  2.00&   1.99&   1.99&1.84\\
				\hline
			\end{tabular}
		\end{center}
	}
\end{table}

\begin{table}[!htbp]
	{\footnotesize
		\caption{Temporal error of $e_{\infty,h,\tau}(T)$ at  $T=1$ under $\tau_e=0.1/2^{12},h_e=1/2^{12}$ with initial value \ref{ini:1}.}\label{tab:temporal3}
		\begin{center}
			\begin{tabular}{|c|c|c|c|c|c|c|c|c|}
				\hline
				$e_{\infty,h,\tau}(T)$ &$\tau_0=0.05$ &$\tau_0/2$ &$\tau_0/2^2$& $\tau_0/2^3$ &$\tau_0/2^4$ &$\tau_0/2^5$&$\tau_0/2^6$&$\tau_0/2^7$\\
				\hline
				% $p=1.5$&     4.88E-2 &  1.27E-2  & 3.20E-3 & 8.07E-4 & 2.02E-4  &5.03E-5 &1.25E-5    \\
				$\varepsilon=0.0125$&  6.23E-4 &  1.56E-4 &  3.95E-5 &  1.02E-5 &  3.44E-6 &  7.44E-7 & 2.43E-7 &  1.15E-7  \\
				order&-&1.99&  1.99&   1.95&   1.58&   2.21&   1.61&  1.09\\
				
				$\varepsilon/2$&  6.06E-4 &  1.52E-4 &  3.79E-5 &  9.49E-6 &  2.50E-6 &  7.36E-7 & 1.97E-7 &  8.09E-8    \\
				order&-&2.00&   2.00&   2.00&  1.93&   1.76&   1.90&  1.29\\
				
				$\varepsilon/2^2$& 5.90E-4&   1.48E-4&   3.69E-5&   9.18E-6 &  2.42E-6 &  6.51E-7 & 1.62E-7 &  5.34E-8 \\
				order&-&2.00&   2.00&  2.01&   1.93&   1.89&  2.00&  1.60\\

				$\varepsilon/2^3$& 5.78E-4  & 1.45E-4  & 3.62E-5  & 9.02E-6 &  2.36E-6 &  5.75E-7&  1.44E-7 &  4.09E-8 \\
				order&-& 2.00&   2.00&   2.00&   1.93&   2.04&   2.00&  1.81\\

				$\varepsilon/2^4$&  5.71E-4 &  1.43E-4 &  3.57E-5 &  8.93E-6 &  2.26E-6 &  5.73E-7 & 1.44E-7 &  4.10E-8  \\
				order&-&2.00&  2.00&   2.00&   1.98&   1.98&  1.99&  1.81\\
				
				$\varepsilon/2^5$& 5.67E-4  & 1.42E-4 &  3.54E-5&   8.86E-6 &  2.21E-6 &  5.63E-7 & 1.41E-7 &  3.78E-8  \\
				order&-&2.00 & 2.00&   2.00&   2.00&   1.98&   2.00&  1.89\\
				$\varepsilon/2^6$& 5.65E-4&   1.41E-4 &  3.53E-5 &  8.84E-6&   2.20E-6 &  5.53E-7&  1.38E-7 &  3.64E-8 \\
				order&-& 2.00&   2.00&   2.00&   2.00&   2.00&   2.00&  1.92\\
				$\varepsilon/2^7$& 5.65E-4 &  1.41E-4 &  3.53E-5 &  8.83E-6 &  2.20E-6 &  5.53E-7&1.39E-7 &  3.61E-8    \\
				order&-&2.00&   2.00&   2.00&   2.00&   1.99&   1.99& 1.94\\
				\hline
			\end{tabular}
		\end{center}
	}
\end{table}

For the spatial error analysis, we set $\tau=\tau_e$ small enough such that the temporal error can be neglected. Tables \ref{tab:spatial1}-\ref{tab:spatial3} list the spatial errors of $L^2$, $H^1$ and the $L^{\infty}$ discrete norm for system \eqref{problem_1d} with initial data \eqref{ini:1}. For temporal error analysis, the mesh size $h=h_e$ is very small such that the spatial discretization error is negligible. Tables \ref{tab:temporal1}-\ref{tab:temporal3} list the temporal errors of $L^2$, $H^1$ and the $L^{\infty}$ discrete norm for the problem \eqref{problem_1d} with initial data \eqref{ini:1}. 

From the Tables \ref{tab:spatial1}-\ref{tab:spatial3} and Tables \ref{tab:temporal1}-\ref{tab:temporal3}, we can observe that:
\begin{itemize}
	\item[(1)] As shown in the Tables \ref{tab:spatial1}-\ref{tab:spatial3},  second-order accuracy in space is clear for CNFD.
	\item[(2)] From the Tables \ref{tab:temporal1}-\ref{tab:temporal3}, we observe that CNFD \eqref{Fdm_2-1}-\eqref{Fdm_2-2} converges quadratically at $O(\tau^2)$ when $\varepsilon$ is sufficiently small, e.g., $\varepsilon\leq C\tau$(cf. lower triangle below the diagonal in bold letter in Table \ref{tab:temporal1}-\ref{tab:temporal3}, especially in Table \ref{tab:temporal2}).
\end{itemize}

\subsection{Conservation laws}
The conservation characteristics of the system \eqref{problem_1d} are very important,
when constructing numerical methods. Here, the conservative characteristic of our numerical method (CNFD)
will be considered in this subsection. Under the condition $T=10,\tau=0.01, h=0.01$ with the initial value condition \eqref{ini:1}, the Figure \ref{fig:s} shows the conservation
laws ((a)(c) for mass,(b)(d) for energy) with different $\varepsilon$. From the Figure \ref{fig:s}, we
can obtain that our numerical method is conservative method.
\begin{figure}[htbp]
	\centering
	\subfloat[]{\includegraphics[width=2.7in,height=1.7in]{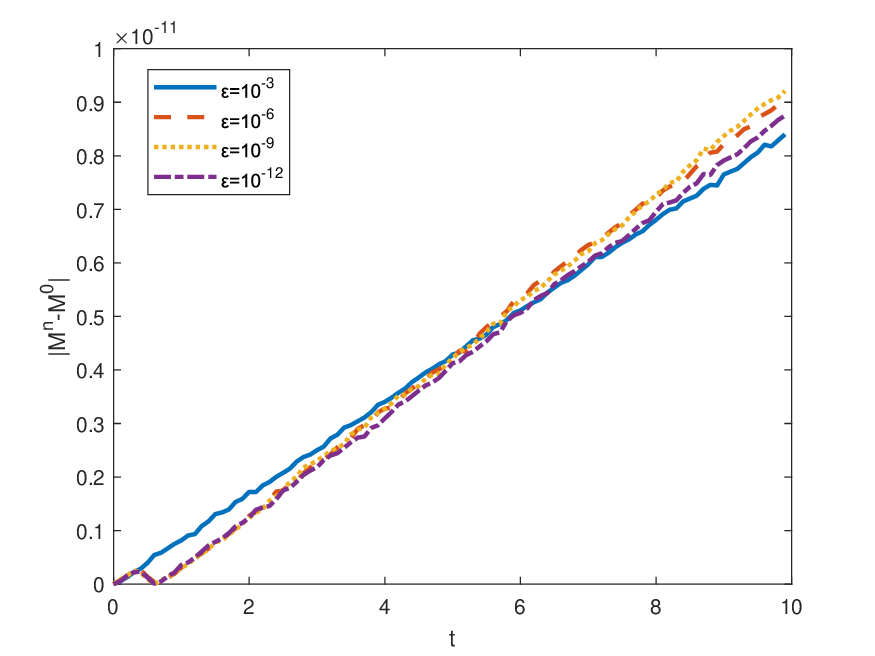}}
	\subfloat[]{\includegraphics[width=2.7in,height=1.7in]{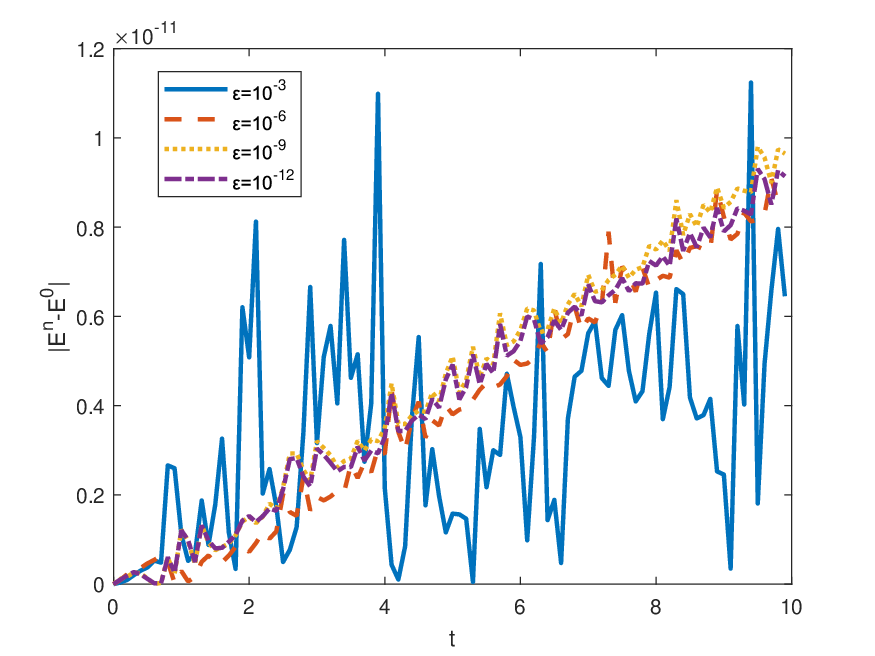}}
	
	\subfloat[]{\includegraphics[width=2.7in,height=1.7in]{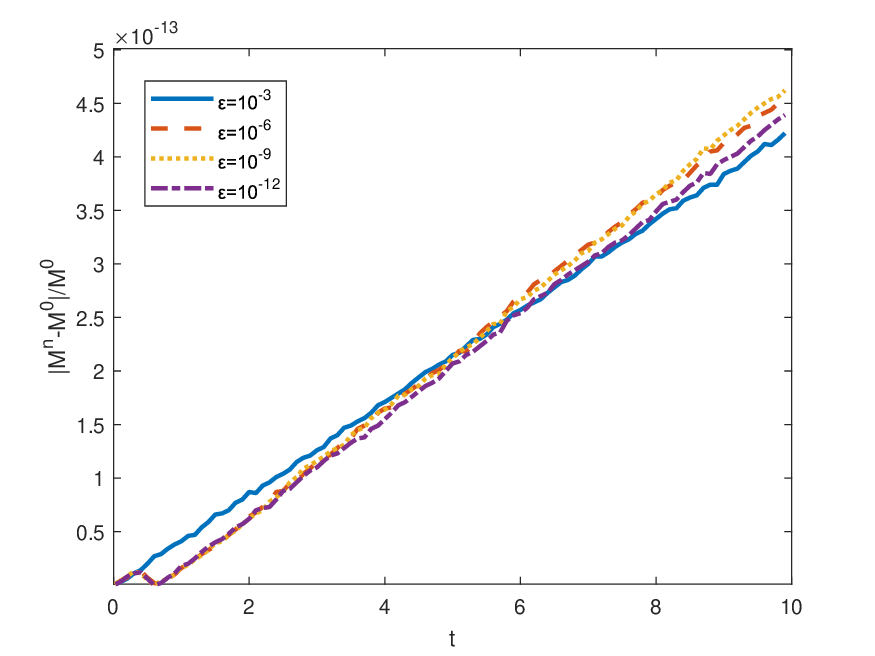}}
	\subfloat[]{\includegraphics[width=2.7in,height=1.7in]{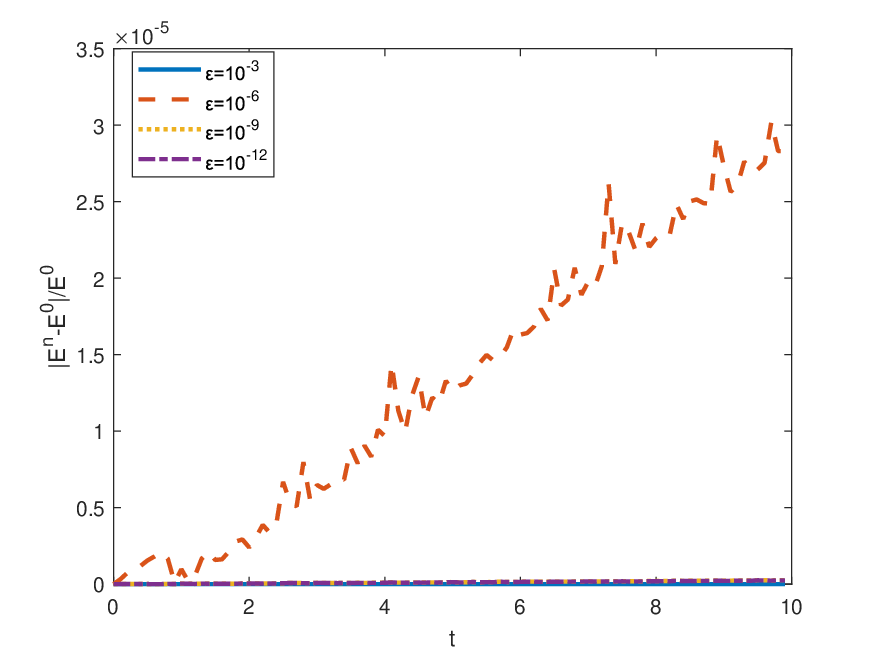}}
	\caption{ The conservation laws of CNFD with initial value \eqref{ini:1} for different $\varepsilon$.}
	\label{fig:s}
\end{figure}
%%%%%%%%%%%%%%%%%%%%%%%%%fdm2--t%%%%%%%%%%%%%%%%%%%%%%%%

\subsection{ Convergence rate of the regularized model}
In the subsection, we consider the error between the solution of \eqref{problem} and the solution of \eqref{RLSE}. The exact solution of the \eqref{problem} is chosen as $u(x,t)=e^{i\omega t}Q_{\omega}(x) \eqref{ini:1}$ and the exact solution of \eqref{RLSE} is obtained numerically by CNFD \eqref{eq:cnfd} with a very small time step $\tau_e=0.1\times\frac{1}{2^{12}}$ and very small mesh size $h_e=\frac{1}{2^{12}}$. Figure \ref{comver} shows $\|e^n\|_{L^2},\|e^n\|_{H^1},\|e^n\|_{\infty}$ at $T=1$.
% (The definition of the norms is similar with \eqref{err})
From Figure \ref{comver} , we can draw the following conclusion: The solution of \eqref{RLSE} converges linearly to that of \eqref{problem} in terms of the regularization parameter $\varepsilon$ in all $L^2$-norm, $L^{\infty}$-norm and $H^1$-norm. 
\begin{figure}[htbp]
	\centering
	\subfloat[]{\includegraphics[width=2.7in,height=1.7in]{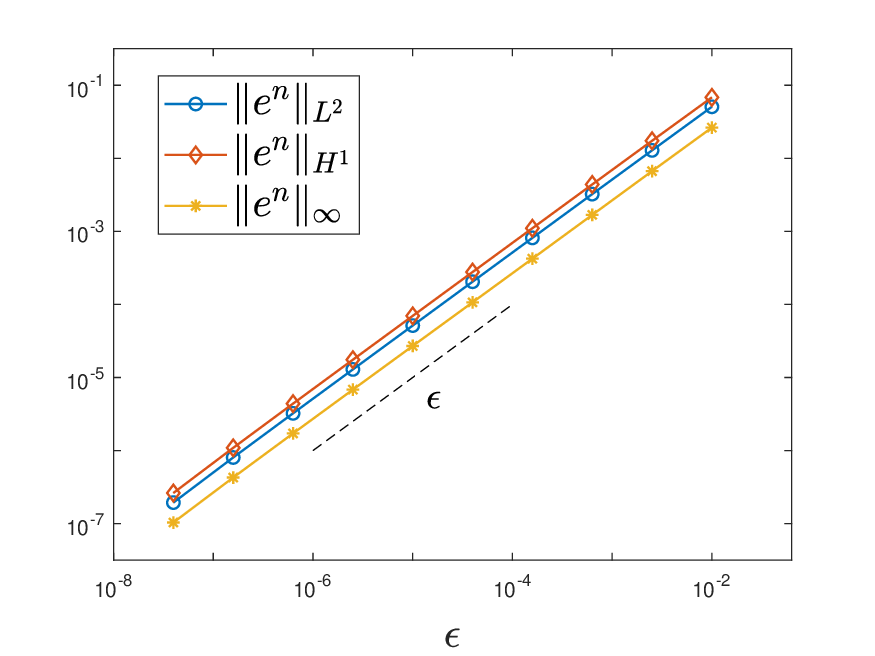}}
	\caption{Convergence rate of the regularized model for different $\epsilon$.}
	\label{comver}
\end{figure}

\subsection{Stability test} 
The purpose of this subsection is to study the orbital stabilities of standing wave solutions $e^{i \omega t} \phi_\omega(x)$ to the problem \eqref{problem}, 
\begin{equation*}
\phi_\omega(x)=e^{\frac{\omega+1}{2}}e^{-\frac{1}{2}(\abs{x}-\frac{\lambda}{2})^2}.
\end{equation*}
%which is a unique positive solution of
%\begin{equation*}
%-\partial_x^2 \phi+ \lambda\delta(x) \phi+\omega \phi-|\phi|^{p-1} \phi=0, \quad x \in \mathbb{R}.
%\end{equation*}
First,  we recall the following definition of orbital stability.
\begin{definition}
We say that the standing wave solution $e^{i \omega t} \phi_\omega$ of (1.1) is orbitally stable in $W(\mathbb{R}):=\left\{u \in H^1(\mathbb{R}):|u|^2 \log |u|^2 \in L^1(\mathbb{R})\right\}$, if for any $\varepsilon>0$ there exists $\delta>0$ such that if $u_0 \in W(\mathbb{R})$ and $\left\|u_0-\phi_\omega\right\|_{W(\mathbb{R})}<\delta$, then the solution $u(t)$ of $(1.1)$ with $u(0)=u_0$ exists globally and satisfies
\begin{equation*}
\sup_{t \in \mathcal{R}} \inf_{\theta \in \mathbb{R}}\left\|u(t)-e^{i \theta} \phi_\omega\right\|_{W(\mathbb{R})}<\varepsilon .
\end{equation*}
Otherwise, $e^{i \omega t} \phi_\omega$ is said to be orbitally unstable.
\end{definition}
%\begin{definition}
%We say that $e^{i \omega t} \phi_\omega$ is strongly unstable if for any $\varepsilon>0$ there exists $u_0 \in H^1(\mathbb{R})$ such that $\left\|u_0-\phi_\omega\right\|_{H^1}<\varepsilon$ and the solution $u(t)$ of (1.1) with $u(0)=u_0$ blows up in finite time.
%\end{definition}

%For the solution $e^{i \omega t}\phi_{\omega}$, we have 
The following results on the orbital stability and instability have been proved:
\begin{thm}\label{thm:orbital}
\cite{Good:2004,Pava:2017}
Let $\lambda \neq 0,\mu=-1,\omega\in \mathbb{R}$ and $\varphi_{\omega}$ be defined by \eqref{ini:1}. Then the following assertions hold.
\begin{itemize}
\item[(i)] If $\lambda<0$, then the standing wave $e^{i \omega t} \varphi_{\omega}$ is orbitally stable in $W(\mathbb{R})$.
\item[(ii)] If $\lambda>0$, then the standing wave $e^{i \omega t} \varphi_{\omega}$ is orbitally unstable in $W(\mathbb{R})$.
%\item[(iii)] The standing wave $e^{i \omega t} \varphi_{\omega, \gamma}$ is orbitally stable in $\widetilde{W}_{\mathrm{rad}}$.
\end{itemize}
\end{thm}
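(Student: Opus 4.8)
\emph{Proof proposal.}
The plan is to cast the statement in the Grillakis--Shatah--Strauss (GSS) framework, the only surviving symmetry of \eqref{problem} after the $\delta$--interaction is turned on being the phase rotation $u\mapsto e^{i\theta}u$. Writing $u(x,t)=e^{i\omega t}v(x)$ and using $\mu=-1$, the profile must solve $-v''+\lambda\delta(x)v-v\ln v^{2}+\omega v=0$ on $\mathbb{R}$ together with the jump condition $v'(0+)-v'(0-)=\lambda v(0)$; substituting $\varphi_\omega$ verifies it is a solution, and matching the two Gaussian branches on $(-\infty,0)$ and $(0,\infty)$ through the jump condition shows $\varphi_\omega$ is the \emph{unique} positive solution when $\lambda\neq0$. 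Thus $e^{i\omega t}\varphi_\omega$ is a constrained critical point of the action $S_\omega(u)=E(u)+\omega M(u)$ on $\{M(u)=M(\varphi_\omega)\}$, and orbital stability versus instability is governed by two pieces of data: the sign of $d''(\omega)$, where $d(\omega)=S_\omega(\varphi_\omega)$ and $d'(\omega)=M(\varphi_\omega)$, and the Morse index $n(\mathcal{L}_+)$ of the real part of the Hessian $S_\omega''(\varphi_\omega)$.

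First I would dispose of the slope condition. Since $\varphi_\omega(x)^{2}=e^{\omega+1}e^{-(|x|-\lambda/2)^{2}}$, one has $d'(\omega)=M(\varphi_\omega)=e^{\omega+1}\int_{\mathbb{R}}e^{-(|x|-\lambda/2)^{2}}\,dx$, hence $d''(\omega)=M(\varphi_\omega)>0$ for \emph{every} $\lambda\in\mathbb{R}$; the slope is therefore always favourable and the dichotomy is decided purely by $n(\mathcal{L}_+)$. Next, linearizing at $\varphi_\omega$ decouples the Hessian into $\mathcal{L}_-=-\partial_{xx}+\lambda\delta+\omega-\ln\varphi_\omega^{2}$ on the imaginary part and $\mathcal{L}_+=\mathcal{L}_--2$ on the real part, both acting on $H^{1}(\mathbb{R})$ with the $\delta$--domain condition. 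Because $\mathcal{L}_-\varphi_\omega=0$ and $\varphi_\omega>0$, a Perron--Frobenius argument gives $\mathcal{L}_-\ge0$ with simple kernel $\mathrm{span}\{\varphi_\omega\}$ (which is exactly the phase direction), so the imaginary part contributes nothing. For $\mathcal{L}_+$ one computes $\mathcal{L}_+\varphi_\omega=-2\varphi_\omega$, so $-2$ is its simple lowest eigenvalue; the problem then reduces to counting the \emph{remaining} negative eigenvalues of $\mathcal{L}_+$, and this spectral count is the step I expect to be the main obstacle.

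To do the count I would split $\mathcal{L}_+$ along the even/odd decomposition in $x$. Using $\ln\varphi_\omega^{2}=(\omega+1)-(|x|-\lambda/2)^{2}$ turns it into $\mathcal{L}_+=-\partial_{xx}+\lambda\delta+(|x|-\lambda/2)^{2}-3$; on odd functions the $\delta$--term disappears (they vanish at $0$) and $\mathcal{L}_+$ restricts to $-\partial_{xx}+(x-\lambda/2)^{2}-3$ on $(0,\infty)$ with a Dirichlet condition at $0$, while on even functions it restricts to the same differential operator on $(0,\infty)$ with the Robin condition $f'(0+)=\tfrac{\lambda}{2}f(0)$. At $\lambda=0$ these are shifted harmonic oscillators with explicit spectra: the even piece has spectrum $\{-2,2,6,\dots\}$ (lowest eigenfunction $\varphi_\omega$), the odd piece has spectrum $\{0,4,8,\dots\}$ (the $0$ being the would-be translation mode). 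I would then track how the bottom eigenvalue of each piece moves as the well centre $\lambda/2$ and the boundary parameter vary --- by Sturm comparison/oscillation, by parabolic-cylinder-function representations of the eigenfunctions, or by a continuation argument in $\lambda$ out of $\lambda=0$: for $\lambda<0$ the well is pushed out of $(0,\infty)$ and the boundary term has favourable sign, so no eigenvalue crosses $0$ and $n(\mathcal{L}_+)=1$ with $\ker\mathcal{L}_+=\{0\}$; for $\lambda>0$ the well is pushed into $(0,\infty)$, the odd bottom eigenvalue dips below $0$, and exactly one extra negative eigenvalue appears, so $n(\mathcal{L}_+)=2$, still with $\ker\mathcal{L}_+=\{0\}$.

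Finally I would feed this into the GSS machinery: $d''(\omega)>0$ together with $n(\mathcal{L}_+)=1$ yields orbital stability of $e^{i\omega t}\varphi_\omega$ for $\lambda<0$, whereas $d''(\omega)>0$ with $n(\mathcal{L}_+)=2$ (so that $n(\mathcal{L}_+)-1$ is odd) yields orbital instability for $\lambda>0$. For the stable case one can alternatively bypass the spectral count and argue variationally that, up to phase, $\varphi_\omega$ is the unique minimizer of $E$ on the mass sphere --- uniqueness coming from the strict convexity of the logarithmic free energy --- and then close with concentration--compactness. The one genuinely delicate point outside the spectral analysis is that $W(\mathbb{R})$ is not a Banach space, so the local well-posedness and the $C^{2}$ dependence $\omega\mapsto\varphi_\omega$ needed by the abstract theory must be imported from the known well-posedness results for \eqref{problem} (for instance through the regularized equation \eqref{RLSE} and the limit $\varepsilon\to0$); the explicit formula for $\varphi_\omega$ makes the regularity in $\omega$ transparent.
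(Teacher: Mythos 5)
The paper does not prove this statement: Theorem~\ref{thm:orbital} is imported verbatim from the cited references, so there is no in-paper argument to compare yours against. Judged on its own terms, your sketch follows the standard Grillakis--Shatah--Strauss route that the cited works also take, and the computable parts are right: $\varphi_\omega$ solves the profile equation with the jump condition $v'(0+)-v'(0-)=\lambda v(0)$, $d''(\omega)=M(\varphi_\omega)>0$, $\mathcal{L}_\pm$ are as you write, $\mathcal{L}_+=-\partial_{xx}+\lambda\delta+(|x|-\lambda/2)^2-3$, and the even/odd splitting with Dirichlet (odd) and Robin $f'(0+)=\tfrac{\lambda}{2}f(0)$ (even) conditions is the correct reduction. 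The odd-sector count can indeed be closed by Feynman--Hellmann: the bottom Dirichlet eigenvalue $E_0(c)$ of $-\partial_{xx}+(x-c)^2-3$ on $(0,\infty)$ satisfies $E_0'(c)=-2\int_0^\infty (x-c)|\psi_0|^2\,dx<0$ near $c=0$ and $E_0(0)=0$, giving $E_0>0$ for $\lambda<0$ and $E_0<0$ for small $\lambda>0$.

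Two gaps remain genuinely open in your write-up. First, the even sector: you must show the \emph{second} even eigenvalue stays strictly positive for both signs of $\lambda$ --- for $\lambda<0$ this is needed to get $n(\mathcal{L}_+)=1$ (stability), and for $\lambda>0$ it is needed to get $n(\mathcal{L}_+)=2$ exactly, since if a third negative direction appeared then $n-p$ would be even and the GSS instability criterion would be silent. Your heuristic that for $\lambda<0$ ``the boundary term has favourable sign'' is backwards: the Robin contribution $\tfrac{\lambda}{2}|f(0)|^2$ is negative for $\lambda<0$ and \emph{lowers} the quadratic form; what saves the count is the competing upward shift of the potential $(x+|\lambda|/2)^2\ge\lambda^2/4$, and disentangling these requires the explicit parabolic-cylinder/continuation analysis you only name. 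Second, the abstract GSS theorems require a $C^2$ action on a Banach space, and here the energy is only defined on the metric space $W(\mathbb{R})$ with a nonlinearity $u\mapsto u\ln|u|^2$ that is not $C^1$ at $u=0$; ``importing well-posedness'' does not repair this, and the cited proofs spend real effort on a regularization and compactness argument (for the stable case, a Cazenave--Lions variational characterization) to replace the smooth GSS machinery. As it stands your proposal is a correct strategy with the two decisive steps asserted rather than proved.
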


Now we apply the CNFD to numerically study the orbital stability of the standing waves of the system \eqref{problem_1d} with $\lambda\in \mathbb{R}$ and $\mu=-1$. In order to do so, we choose the initial data as
\begin{align}
%&u_0(x)=Q_{\omega}(x)+\eta(1+0.5i)\exp(-(x+1)^2),\label{perturbation_1}%\\
&u_0(x)=Q_{\omega}(x)+\eta(1+0.5i)\sin(x),\label{perturbation_2}
\end{align}
where $\eta>0$ is a parameter to measure the amplitude of the perturbation. Then, the system \eqref{problem_1d} is solved numerically on $\Omega=[-40,40]$ by the CNFD method with  very fine mesh sizes $\tau=0.001$ and $2M=8000$. Figures \ref{fig:stab_m}-\ref{fig:stab_p} 
show the difference $\|u(t)-e^{i\omega t}\phi_{\omega}\|_{W(\mathbb{R})}$($e(\omega,\eta)$) and $\min_{\theta\in \mathcal{R}}{\|u(t)-e^{i\theta}\phi_{\omega}\|_{W(\mathbb{R})}}$($e_{\theta}(\omega,\eta)$) for two cases, i.e. $\lambda <0$ and $\lambda >0$ with different $\eta,\omega,T$. From Figures \ref{fig:stab_m}-\ref{fig:stab_p},
 we can draw the following observations:
\begin{itemize}
\item[(1)]
when $\lambda=-1(\lambda <0 ),\omega=1,T=30$, no matter what the value of the parameter $\eta$ and $T$ are, the standing wave solution is always orbital stable; and the norm $e_{\theta}(\omega,\eta)$ changes periodically (please see Figure \ref{fig:stab_m}).
\item[(2)]
when $\lambda=1(\lambda >0 ),\omega=1,T=50$ and $\eta$ decreases from $10^{-1}$ to $10^{-8}$  ($\eta=10^{-1},10^{-3},10^{-6},10^{-8}$), the standing wave solution is always orbital unstable, although the time of blow-up phenomenon is delayed  (please see Figure \ref{fig:stab_m}).

\end{itemize}

%%%%%%%%%%%%%%%%%%%%%第一组图  稳定
%m
\begin{figure}[htb]
  \centering
  \subfloat[]{\includegraphics[width=2.7in,height=1.7in]{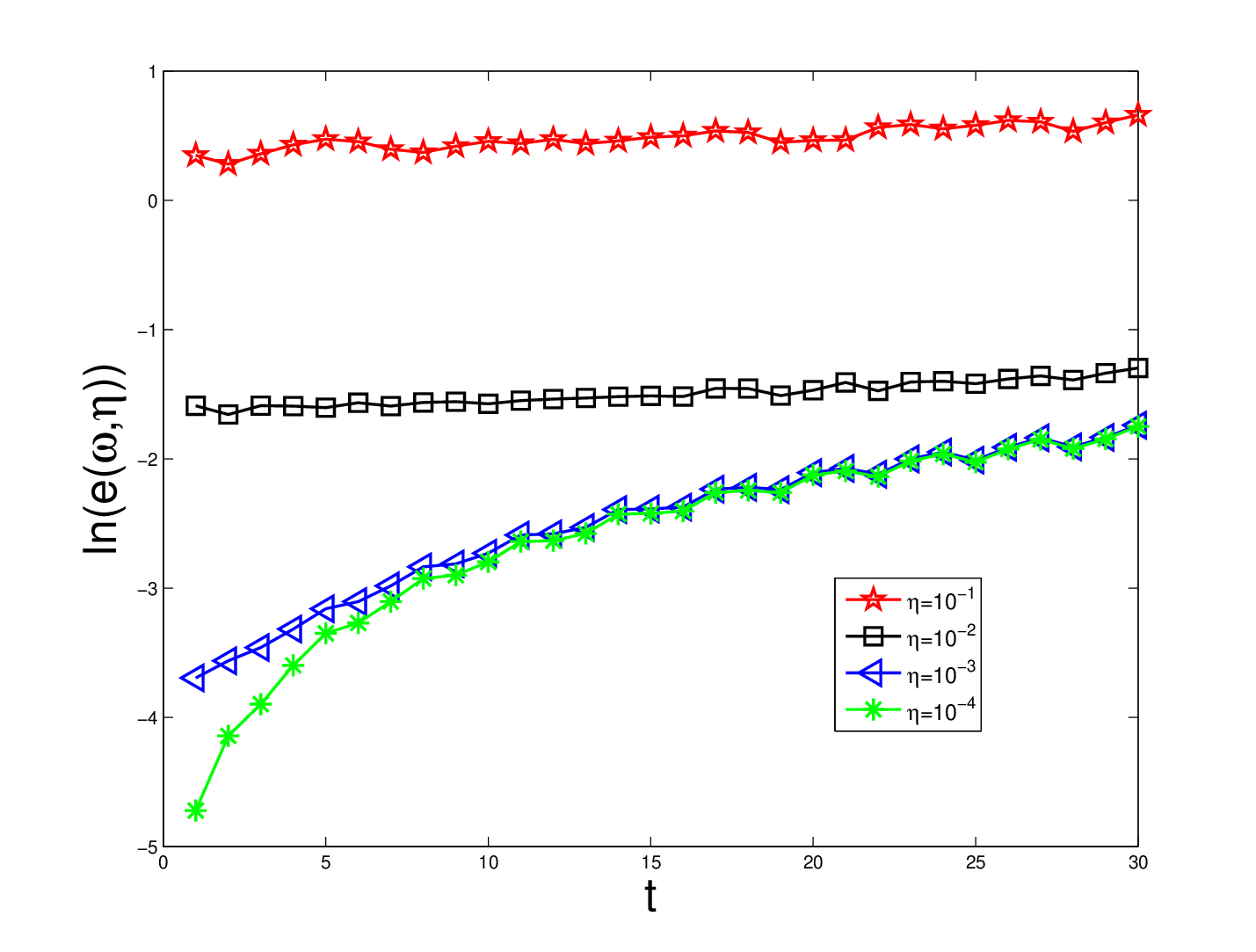}}
  \subfloat[]{\includegraphics[width=2.7in,height=1.7in]{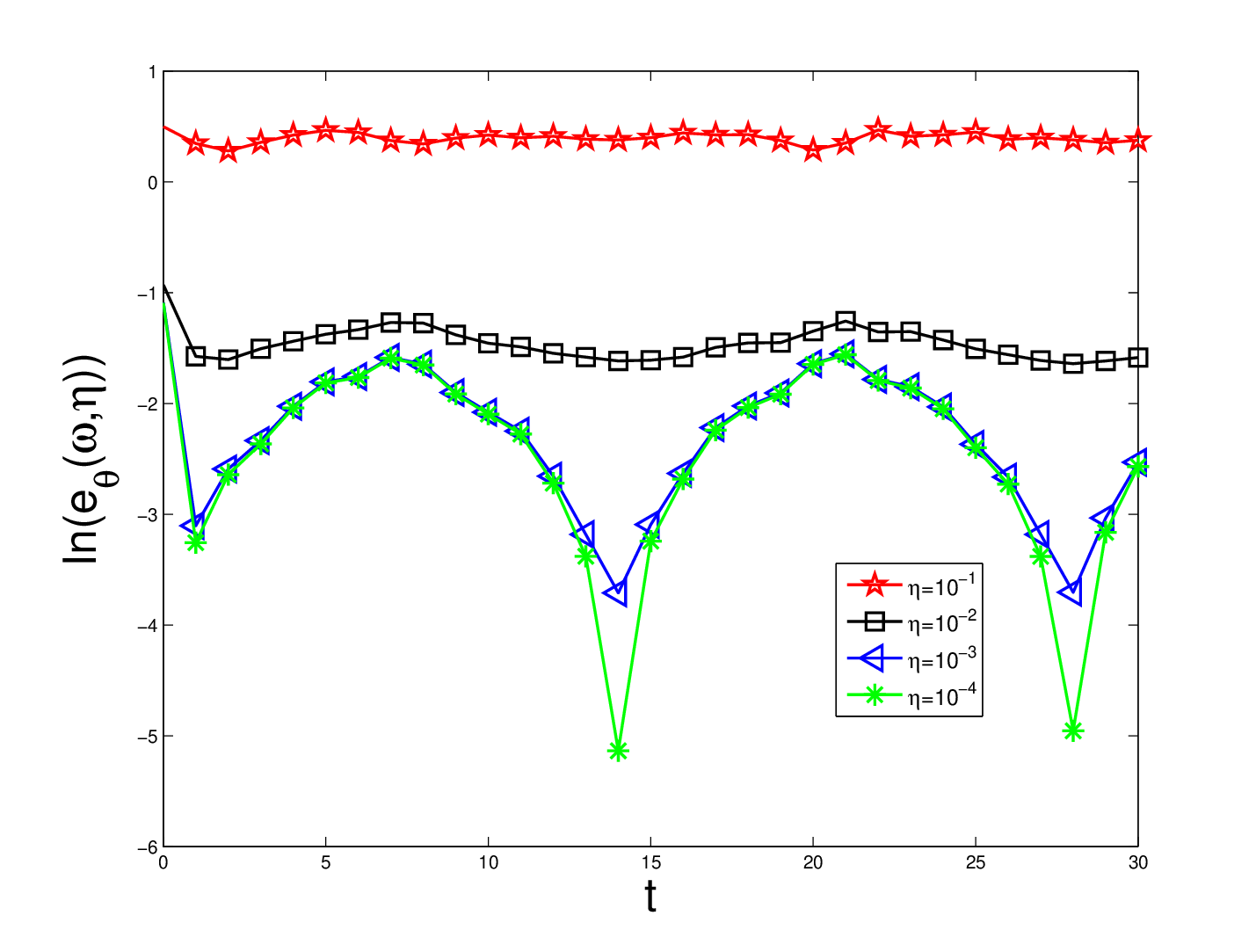}}
\caption{ The log-plot of $e(\omega,\eta)$ and $e_{\theta}(\omega,\eta)$ for different $\eta$ of the perturbation \eqref{perturbation_2} under $\lambda=-1,T=30$.}
\label{fig:stab_m}
\end{figure}
%%%%%%%%%%%%%%%%%%%%%第二组图  不稳定
%p
\begin{figure}[htb]
  \centering
  \subfloat[]{\includegraphics[width=2.7in,height=1.7in]{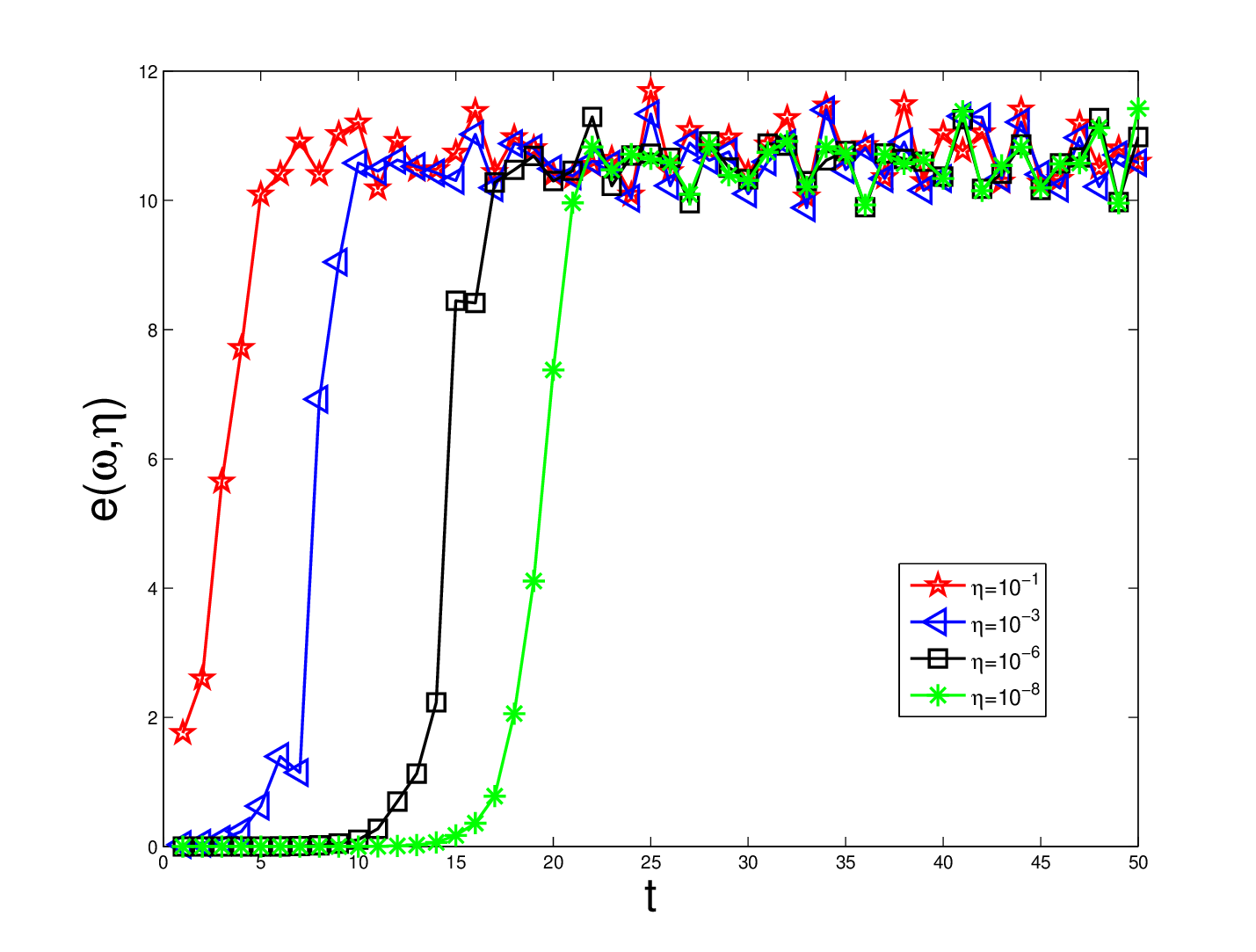}}
  \subfloat[]{\includegraphics[width=2.7in,height=1.7in]{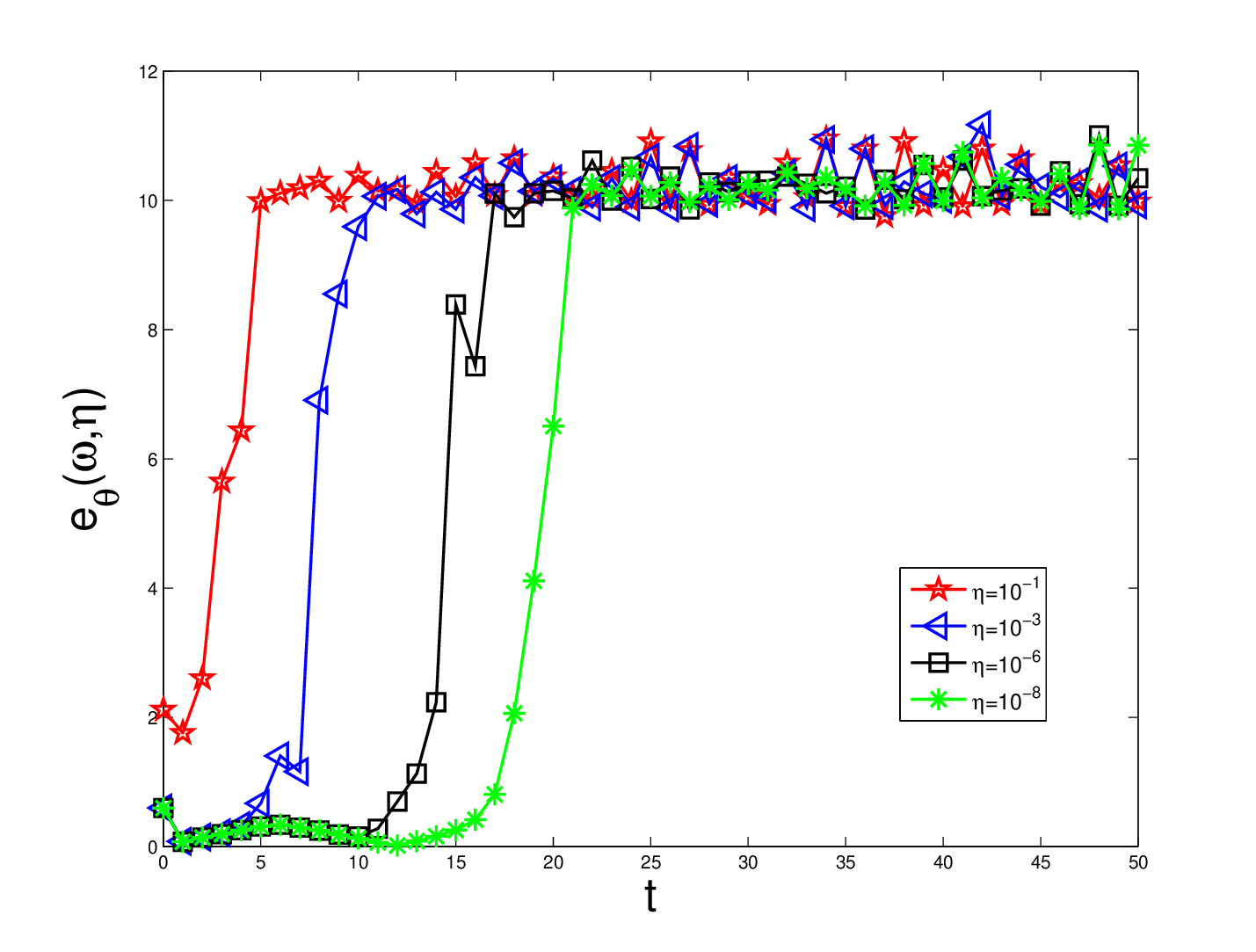}}
\caption{ The plot of $e(\omega,\eta)$ and $e_{\theta}(\omega,\eta)$ for different $\eta$ of the perturbation \eqref{perturbation_2} under $\lambda=1,T=50$.}
\label{fig:stab_p}
\end{figure}

\section{Conclusions}
\label{sec:conclusions}
In this paper, we analyzed a conservative finite difference schemes for  the Regularized Logarithmic Schr\"{o}dinger equation with a Dirac delta potential in 1D. The conservative properties and the $H^1$ error analysis of the finite difference methods were carried out by treating the interface condition carefully. The results implied that a simple approximation of the Dirac delta potential could lead to the second order accuracy in space.  Numerical results confirmed the convergence rates in time and space, discrete conservative laws and the convergence rate from RLSE to LSE of the proposed methods. Finally,  stability test verified the orbital stability results of the standing wave solution. In the future, we will consider how to do the regularization to improve the convergence order of the regularized model to the original problem.

%%%%%%%%%%%%%%%%%%%%%%%%%%%%%%%%%%%%%%%%%附录E

\clearpage
\begin{appendices}
\appendix

\section{The {\it a priori} $l^\infty$ bounds of the numerical solutions of CNFD:}
\label{appendix:A}
We show the proof of CNFD here.
Firstly, applying the discrete Sobolev inequality and H\"older's inequality, we have
\begin{align*}
\mu|U_M^k|^2\leq |\mu|\|U^k\|^2_{\infty}
&\leq 4|\mu \|U^k\| \|\delta_x^+U^k\|
\leq  4|\mu|(2\eta\|\delta_x^+U^k\|^2+ \frac{1}{4\eta}\|U^k\|^2),\quad \forall \eta>0,
\end{align*}
and 
%%%%%%%%%%%%%%%%%%%%%%p
\begin{align*}
h\sum\limits_{j\in\mathcal{T}_M^0}Q(|U_j^{k}|^2)
&=|\lambda| h\sum\limits_{j\in\mathcal{T}_M^0}\left ( 2(|U_j^{k}|^2-\eps^2)\ln(|U_j^k|+\eps)-|U_j^k|^2+2\eps|U_j^k| \right )\\
&\leq |\lambda|\left\{ M^0+ 2\eps\|U^k\|_{\infty} + h\sum\limits_{j\in\mathcal{T}_M^0} 2(|U_j^{k}|^2-\eps^2)\ln(|U_j^k|+\eps) \right \},
\end{align*}
since $|\ln(x)|<\frac{1}{x},0<x\leq 1;\ln(x)<x,x \geq 1,$ thus if $0<|U_j^k|+\eps\leq 1$
%$(|U_j^{k}|^2-\eps^2)\ln(|U_j^k|+\eps)\leq (|U_j^{k}|^2-\eps^2)/(|U_j^k|+\eps)\leq ||U_j^{k}|-\eps|$, thus we have 
\begin{align*}
h\sum\limits_{j\in\mathcal{T}_M^0}Q(|U_j^{k}|^2)
&\leq |\lambda|\left\{ M^0+ 2\eps\|U^k\|_{\infty} + h\sum\limits_{j\in\mathcal{T}_M^0} 2||U_j^{k}|-\eps| \right \}
\leq  |\lambda|\left\{ M^0+ 2\eps\|U^k\|_{\infty} + h\sum\limits_{j\in\mathcal{T}_M^0} 2(|U_j^{k}|+\eps) \right \}\\
&\leq |\lambda|\left\{ M^0+ 2(\eps+1)\|U^k\|_{\infty} + \eps \right \},
%\leq \lambda\left\{ M^0+ 2\eps\|U^k\|_{\infty} + h\sum\limits_{j\in\mathcal{T}_M^0} 2C_{U^k,\eps} \right \}
\end{align*}
else if $|U_j^k|+\eps\geq 1(|U_j^k|>\eps)$,
\begin{align*}
h\sum\limits_{j\in\mathcal{T}_M^0}Q(|U_j^{k}|^2)
&\leq |\lambda|\left\{ M^0+ 2\eps\|U^k\|_{\infty} + h\sum\limits_{j\in\mathcal{T}_M^0} 2(|U_j^{k}|^2-\eps^2)(|U_j^{k}|+\eps) \right \}\\
&\leq  |\lambda|\left\{ M^0+ 2\eps\|U^k\|_{\infty} + h\sum\limits_{j\in\mathcal{T}_M^0} 2(|U_j^{k}|^3-\eps^3-\eps^2|U_j^k|+\eps|U_j^k|^2) \right \}\\
&\leq |\lambda|\left\{ (\eps+1)M^0+ 2(\eps+1-\eps^2)\|U^k\|_{\infty} -2 \eps^3 + 2\|U_j^{k}\|_{3}^3\right \}
%\leq \lambda\left\{ M^0+ 2\eps\|U^k\|_{\infty} + h\sum\limits_{j\in\mathcal{T}_M^0} 2C_{U^k,\eps} \right \}
\end{align*}
\begin{align*}
2\lambda h\sum_{j=0}^{2M-1}|U_j^k|^3&
\leq 2|\lambda|\|U^k\|_\infty\cdot\|U^k\|^{2} \leq 4|\lambda|\|\delta_x^+U_k\|^{\frac{1}{2}}\|U^k\|^{\frac{5}{2}}
\\
&\leq  \left(\eps\|\delta_x^+U^k\|^2+\frac{C}{\eps}\|U^k\|^5\right),
\end{align*}
where $\eps>0$ is arbitrary.
%where $C_{U^k,\eps}=\max\{\|U^k\|_{\infty},\eps\}$ is arbitrary.
%what is the C,^{\text{H\"{o}lder-inequality}}(|U_j^{k}|^2-\eps^2)\ln(|U_j^k|+\eps)=(|U_j^{k}|-\eps)\ln(|U_j^k|+\eps)^{(|U_j^{k}|+\eps)}

Now, for $\mu,\lambda\in \mathbb{R}$,  we  conclude that there exists $C>0$ depending only on $M_0$, $E_0$, $\lambda$, $\mu$, $p$ and $a$ such that
\begin{equation*}
\|\delta_x^+U^k\|\leq C,\, \|U^k\|\leq C, \|U^k\|_{\infty}\leq C,\quad \forall k\ge0.
\end{equation*}
%We note that the above bounds can be established for the case with $\mu<0$, if the discrete mass $M_0$ is sufficiently small.

\section{ Local error bound of $R_M^k$ of CNFD:}
\label{appendix_B}

\begin{align*}
r_0^k&= 2\int_0^1\ln(\eps+\sqrt{\sigma|u(0,t_{k+1})|^2+(1-\sigma)|u(0,t_{k})|^2})-\ln(\eps+\sqrt{\frac{|u(0,t_{k+1})|^2+|u(0,t_{k})|^2}{2}})\,d\sigma \\
&+  2\ln(\eps+\sqrt{\frac{|u(0,t_{k+1})|^2+|u(0,t_{k})|^2}{2}})-2\ln(\eps+|u(0,t_{k+\frac{1}{2}})|) \\
&= 2\int_0^1\int_{\frac{1}{2}}^\sigma (\sigma-w)\frac{d^2}{dw}\ln(\eps+f(w))dw\,d\sigma+
2 \ln(\frac{\eps+\sqrt{\frac{|u(0,t_{k+1})|^2+|u(0,t_{k})|^2}{2}}} {\eps+|u(0,t_{k+\frac{1}{2}})|} )\\
&= 2\int_0^1\int_{\frac{1}{2}}^\sigma (\sigma-w)\left( |u(0,t_{k+1})|^2-|u(0,t_{k})|^2 \right)^2\left(\frac{1}{(\eps+f(w))f(w)^3}-\frac{1}{(\eps+f(w))^2f(w)^2}    \right)dw\,d\sigma\\
&+2 \ln(\frac{\eps+\sqrt{\frac{|u(0,t_{k+1})|^2+|u(0,t_{k})|^2}{2}}} {\eps+|u(0,t_{k+\frac{1}{2}})|} )
\end{align*}
where $f(w)=\sqrt{w|u(0,t_{k+1})|^2+(1-w)|u(0,t_{k})|^2},$ thus we can get
\begin{align*}
|r_0^k|&\leq  2\tau^2\max\{\frac{1}{\eps^2},\frac{1}{M_1^4}\}\|\partial_{t}u(0,t_{k+\frac{1}{2}})\|_{\infty}+2\left| \ln(1+\frac{\sqrt{\frac{|u(0,t_{k+1})|^2+|u(0,t_{k})|^2}{2}}-|u(0,t_{k+\frac{1}{2}})| } {\eps+|u(0,t_{k+\frac{1}{2}})|} )\right|\\
&{\text{if $\sqrt{\frac{|u(0,t_{k+1})|^2+|u(0,t_{k})|^2}{2}}\geq |u(0,t_{k+\frac{1}{2}})|$}}\\
%ln(1+x)<x  x>0
&\leq 2\tau^2\max\{\frac{1}{\eps^2},\frac{1}{M_1^4}\}\|\partial_{t}u(0,t_{k+\frac{1}{2}})\|_{\infty}
+2\left| \frac{\frac{|u(0,t_{k+1})|^2+|u(0,t_{k})|^2}{2}-|u(0,t_{k+\frac{1}{2}})|^2 } {(\eps+|u(0,t_{k+\frac{1}{2}})|)(\sqrt{\frac{|u(0,t_{k+1})|^2+|u(0,t_{k})|^2}{2}}+|u(0,t_{k+\frac{1}{2}})| )} \right|\\
&\leq 2\tau^2\max\{\frac{1}{\eps^2},\frac{1}{M_1^4}\}\|\partial_{t}u(0,t_{k+\frac{1}{2}})\|_{\infty}
+2\tau^2\max\{\frac{1}{\eps},\frac{1}{M_1}\}\|\partial_{tt}u(0,t_{k+\frac{1}{2}})\|_{\infty}.
%
%&\leq 2\left| \int_0^1\ln(\frac{ \eps+\sqrt{\sigma|u(0,t_{k+1})|^2+(1-\sigma)|u(0,t_{k})|^2} } {\eps+\sqrt{\frac{|u(0,t_{k+1})|^2+|u(0,t_{k})|^2}{2}}} )\,d\sigma \right|+
%2\left| \ln(\frac{\eps+\sqrt{\frac{|u(0,t_{k+1})|^2+|u(0,t_{k})|^2}{2}}} {\eps+|u(0,t_{k+\frac{1}{2}})|} )\right|\\
%%
%&\leq 2\left| \int_0^1\ln(1+ \frac{ \sqrt{\sigma|u(0,t_{k+1})|^2+(1-\sigma)|u(0,t_{k})|^2}-\sqrt{\frac{|u(0,t_{k+1})|^2+|u(0,t_{k})|^2}{2}} } {\eps+\sqrt{\frac{|u(0,t_{k+1})|^2+|u(0,t_{k})|^2}{2}}} )\,d\sigma \right|\\
%&+2\left| \ln(1+\frac{\sqrt{\frac{|u(0,t_{k+1})|^2+|u(0,t_{k})|^2}{2}}-|u(0,t_{k+\frac{1}{2}})| } {\eps+|u(0,t_{k+\frac{1}{2}})|} )\right|\\
%&{\text{if $\sqrt{\sigma|u(0,t_{k+1})|^2+(1-\sigma)|u(0,t_{k})|^2}\geq \sqrt{\frac{|u(0,t_{k+1})|^2+|u(0,t_{k})|^2}{2}}$ and $\sqrt{\frac{|u(0,t_{k+1})|^2+|u(0,t_{k})|^2}{2}}\geq |u(0,t_{k+\frac{1}{2}})|$}}\\
%%ln(1+x)<x  x>0
%&\leq 2\left|  \frac{ \int_0^1 \sigma|u(0,t_{k+1})|^2+(1-\sigma)|u(0,t_{k})|^2-\frac{|u(0,t_{k+1})|^2+|u(0,t_{k})|^2}{2} \,d\sigma=0} {(\eps+\sqrt{\frac{|u(0,t_{k+1})|^2+|u(0,t_{k})|^2}{2}})(\sqrt{\sigma|u(0,t_{k+1})|^2+(1-\sigma)|u(0,t_{k})|^2}+\sqrt{\frac{|u(0,t_{k+1})|^2+|u(0,t_{k})|^2}{2}})}  \right|\\
%&+2\left| \frac{\frac{|u(0,t_{k+1})|^2+|u(0,t_{k})|^2}{2}-|u(0,t_{k+\frac{1}{2}})|^2 } {(\eps+|u(0,t_{k+\frac{1}{2}})|)(\sqrt{\frac{|u(0,t_{k+1})|^2+|u(0,t_{k})|^2}{2}}+|u(0,t_{k+\frac{1}{2}})| )} \right|\\
\end{align*}
%1:middle point rule to get the bound 2: estimate the term one by one
For $\delta_t^+R_M^k$, we focus on the term $\delta_t^+\left( u(0,t_{k+\frac{1}{2}})r_0^k \right)$, and other terms can be estimated by similar procedure of $R_M^k$:
\begin{align*}
&\delta_t^+\left( u(0,t_{k+\frac{1}{2}})r_0^k \right)=\frac{1}{\tau}\left( (u(0,t_{k+\frac{3}{2}})-u(0,t_{k+\frac{1}{2}}))r_0^k+ u(0,t_{k+\frac{3}{2}})(r_0^{k+1}-r_0^k) \right)\\
&\leq \|\partial_{t}u(0,t_{k+\frac{1}{2}})\|_{\infty}|r_0^k|+u(0,t_{k+\frac{3}{2}})\delta_t^+r_0^{k}
\end{align*}
to give the bound for $\delta_t^+R_M^k$, for simplicity of notation, denote
\begin{equation*}
	u^{\theta}(0,t_k)=\theta u(0,t_k+1)+(1-\theta)u(0,t_k),\,u^{\theta}(0,t_{k+\frac{1}{2}})=\theta u(0,t_{k+\frac{3}{2}})+(1-\theta)u(0,t_{k+\frac{1}{2}}),
\end{equation*}
since
\begin{align*}
	&\delta_t^+r_0^{k}=2\delta_t^+\left( q_0^k-\ln(\eps+|u(.,t_{k+\frac{1}{2}})|)  \right)\\
	&=2\left( \delta_t^+q_0^k-\delta_t^+\ln(\eps+|u(.,t_{k+\frac{1}{2}})|)  \right)\\
	&=2\left( \int_0^1\frac{d}{d\theta}\left[ \int_0^1 \ln(\eps+\sqrt{\sigma|u^{\theta}(0,t_{k+1})|^2+(1-\sigma)|u^{\theta}(0,t_{k})|^2}) \,d\sigma\right]\,d\theta -\int_0^1 \frac{d}{d\theta}\ln(\eps+|u^{\theta}(.,t_{k+\frac{1}{2}})|)  \,d\theta \right)\\
	&=2\tau Re\int_0^1 \int_0^1 \frac{\sigma \delta_t^+u(0,t_{k+1})\overline{u^{\theta}(0,t_{k+1})}+ (1-\sigma)\delta_t^+u(0,t_{k})\overline{u^{\theta}(0,t_{k})} }
	{(\eps+\sqrt{\sigma|u^{\theta}(0,t_{k+1})|^2+(1-\sigma)|u^{\theta}(0,t_{k})|^2})\sqrt{\sigma|u^{\theta}(0,t_{k+1})|^2+(1-\sigma)|u^{\theta}(0,t_{k})|^2}} \,d\sigma\\
	&-\frac{\delta_t^+u(0,t_{k+\frac{1}{2}})\overline{u^{\theta}(0,t_{k+\frac{1}{2}})} }
	{(\eps+|u^{\theta}(.,t_{k+\frac{1}{2}})|)|u^{\theta}(.,t_{k+\frac{1}{2}})|}   \,d\theta
\end{align*}
using the mid-point rule, we can get
\begin{align*}
	&\delta_t^+r_0^{k}
	=2\tau Re\int_0^1 \int_0^1 \frac{\sigma \delta_t^+u(0,t_{k+1})\overline{u^{\theta}(0,t_{k+1})}+ (1-\sigma)\delta_t^+u(0,t_{k})\overline{u^{\theta}(0,t_{k})} }
	{(\eps+\sqrt{\sigma|u^{\theta}(0,t_{k+1})|^2+(1-\sigma)|u^{\theta}(0,t_{k})|^2})\sqrt{\sigma|u^{\theta}(0,t_{k+1})|^2+(1-\sigma)|u^{\theta}(0,t_{k})|^2}} \,d\sigma\\
	&-\frac{ \delta_t^+u(0,t_{k+1})\overline{u^{\theta}(0,t_{k+1})}+ \delta_t^+u(0,t_{k})\overline{u^{\theta}(0,t_{k})} }
	{2(\eps+\sqrt{\frac{|u^{\theta}(0,t_{k+1})|^2+|u^{\theta}(0,t_{k})|^2}{2}})\sqrt{\frac{|u^{\theta}(0,t_{k+1})|^2+|u^{\theta}(0,t_{k})|^2}{2}}}
	+\frac{ \delta_t^+u(0,t_{k+1})\overline{u^{\theta}(0,t_{k+1})}+ \delta_t^+u(0,t_{k})\overline{u^{\theta}(0,t_{k})} }
	{2(\eps+\sqrt{\frac{|u^{\theta}(0,t_{k+1})|^2+|u^{\theta}(0,t_{k})|^2}{2}})\sqrt{\frac{|u^{\theta}(0,t_{k+1})|^2+|u^{\theta}(0,t_{k})|^2}{2}}}\\
	&-\frac{ \delta_t^+u(0,t_{k+1})\overline{u^{\theta}(0,t_{k+1})}+ \delta_t^+u(0,t_{k})\overline{u^{\theta}(0,t_{k})} }
	{2(\eps+|u^{\theta}(.,t_{k+\frac{1}{2}})|)|u^{\theta}(.,t_{k+\frac{1}{2}})|}
	+\frac{ \delta_t^+u(0,t_{k+1})\overline{u^{\theta}(0,t_{k+1})}+ \delta_t^+u(0,t_{k})\overline{u^{\theta}(0,t_{k})} }
	{2(\eps+|u^{\theta}(.,t_{k+\frac{1}{2}})|)|u^{\theta}(.,t_{k+\frac{1}{2}})|}\\
	&-\frac{\delta_t^+u(0,t_{k+\frac{1}{2}})\overline{u^{\theta}(0,t_{k+\frac{1}{2}})} }
	{(\eps+|u^{\theta}(.,t_{k+\frac{1}{2}})|)|u^{\theta}(.,t_{k+\frac{1}{2}})|}   \,d\theta\\
	&\leq \tau^2\max\{\frac{1}{\eps^2},\frac{1}{M_1^2}\}\|\partial_{t}u^{\theta}(0,t_{k+\frac{1}{2}})\|_{\infty}
	+
	\tau^2\max\{\frac{1}{\eps},\frac{1}{M_1^3}\}\|\partial_{tt}u^{\theta}(0,t_{k+\frac{1}{2}})\|_{\infty}
	+ \tau^2\max\{\frac{1}{\eps},\frac{1}{M_1^2}\}\|\partial_{tt}(\delta_t^+u(0,t_{k+\frac{1}{2}})\overline{u^{\theta}(0,t_{k+\frac{1}{2}})})\|_{\infty}
\end{align*}

%Q
\section{ The bound of $\hat{Q}_j^k$ and $\delta_x^+\hat{Q}_j^k$:} the following estimation is constructed under the condition $\sqrt{\theta |u(x_j,t_{k+1})|^2+(1-\theta)|u(x_j,t_{k})|^2}\geq \sqrt{\theta |U_j^{k+1}|^2+(1-\theta)|U_j^k|^2}$, if not,  we can exchange the places of  the two term to continue the estimation:
\label{appendix_C}
\begin{align*}
\hat{Q}_j^k
=&\int_0^1\ln(\varepsilon+\sqrt{\theta |u(x_j,t_{k+1})|^2+(1-\theta)|u(x_j,t_{k})|^2})\,d\theta \left(e_j^{k+1}+e_j^k\right)\\
&+\int_0^1\ln\left(1+\frac{\sqrt{\theta |u(x_j,t_{k+1})|^2+(1-\theta)|u(x_j,t_{k})|^2}-\sqrt{\theta |U_j^{k+1}|^2+(1-\theta)|U_j^k|^2} }{\varepsilon+\sqrt{\theta |U_j^{k+1}|^2+(1-\theta)|U_j^k|^2}}\right)d\theta(U_j^{k+1}+U_j^k)\\
&\leq \max{(|\ln(\varepsilon)|,|\ln(\varepsilon+M_1)|)} \left(e_j^{k+1}+e_j^k\right)+ \max{(\frac{M_1}{\varepsilon},1)} \left(e_j^{k+1}+e_j^k\right),
\end{align*}
and for $\delta_x^+\hat{Q}_j^k$, similar to the estimate of $\delta_x^+r_{j}^{\eps,m}$ in \eqref{appendix_B}, we can get the results in Lemma \ref{Nonlinearity_1}.

%define 
%\begin{align*}
%&q_{\theta,j}=\int_0^1\ln(\varepsilon+\sqrt{\theta |u(x_j,t_{k+1})|^2+(1-\theta)|u(x_j,t_{k})|^2})\,d\theta,\\
%&Q_{\theta,j}=\int_0^1\ln\left(\varepsilon+\sqrt{\theta |u(x_j,t_{k+1})|^2+(1-\theta)|u(x_j,t_{k})|^2}\right)-\ln\left({\varepsilon+\sqrt{\theta |U_j^{k+1}|^2+(1-\theta)|U_j^k|^2}}\right)d\theta,
%\end{align*}
%we have
%\begin{align*}
%\delta_x^+\hat{Q}_j^k
%&=\delta_x^+\left(q_{\theta,j} e_j^{k+1/2}\right)+ \delta_x^+\left(Q_{\theta,j}U_j^{k+1/2}\right)\\
%&=\delta_x^+q_{\theta,j} e_j^{k+1/2}+ q_{\theta,j+1} \delta_x^+e_j^{k+1/2}+ \delta_x^+Q_{\theta,j}U_j^{k+1/2} + Q_{\theta,j+1}\delta_x^+U_j^{k+1/2}, 
%\end{align*}
%since 
%\begin{align*}
%h\delta_x^+Q_{\theta,j}
%&=\int_0^1\ln\left(\varepsilon+\sqrt{\theta |u(x_{j+1},t_{k+1})|^2+(1-\theta)|u(x_{j+1},t_{k})|^2}\right)-\ln\left({\varepsilon+\sqrt{\theta |U_{j+1}^{k+1}|^2+(1-\theta)|U_{j+1}^k|^2}}\right)d\theta\\
%&-\int_0^1\ln\left(\varepsilon+\sqrt{\theta |u(x_j,t_{k+1})|^2+(1-\theta)|u(x_j,t_{k})|^2}\right)-\ln\left({\varepsilon+\sqrt{\theta |U_j^{k+1}|^2+(1-\theta)|U_j^k|^2}}\right)d\theta\\
%&\leq 
%\end{align*}
%

\end{appendices}

%\bibliographystyle{siamplain}
%\bibliography{references}

\begin{thebibliography}{10}

%physical background

\bibitem{Cazenave:2003} 
T~Cazenave, \emph{ Semilinear Schr\"{o}dinger Equations}, Courant Lect. Notes Math. 10, Courant
Institute of Mathematical Sciences, New York, 2003.



\bibitem{Bose:2011}
A~V~Avdeenkov, K~G~Zloshchastiev, \emph{Quantum Bose liquids with logarithmic nonlinearity: self sustainability and emergence of spatial extent}, J. Phys. B: Atomic Mol. Opt. Phys. \textbf{44(19)} (2011), 195303.

\bibitem{IJ:1976}
I~Białynicki-Birula, J~Mycielski, \emph{ Nonlinear wave mechanics}, Ann. Phys. \textbf{ 100(1-2)} (1976), 62--93.

\bibitem{IJ:1979}
I~Białynicki-Birula, J~Mycielski, \emph{ Gaussons: solitons of the logarithmic Schr\"{o}dinger equation. Special
issue on solitons in physics}, Phys. Scr. \textbf{20} (1979), 539--544.


\bibitem{Hansson:2009}
T~Hansson, D~Anderson, M~Lisak, \emph{ Propagation of partially coherent solitons in saturable logarithmic
media: a comparative analysis}, Phys. Rev. A.  \textbf{80(3)}(2009) , 033819. 

\bibitem{Hefter:1985}
E~F~Hefter, \emph{ Application of the nonlinear Schr\"{o}dinger equation with a logarithmic inhomogeneous
term to nuclear physics}, Phys. Rev. A. \textbf{ 32} (1985), 1201--1204.

\bibitem{Hernandez:1980}
E~S~Hernandez, B~Remaud, \emph{ General properties of Gausson-conserving descriptions of quantal
damped motion}, Physica. A.\textbf{ 105} (1980) , 130--146. 

\bibitem{Krolik:2000}
W~Krolikowski, D~Edmundson, O~Bang, \emph{ Unified model for partially coherent solitons in logarithmically nonlinear media},  Phys. Rev. E.\textbf{ 61} (2000), 3122--3126.

\bibitem{Martino:2003} 
S~D~Martino, M~Falanga, C~Godano, G~Lauro, \emph{ Logarithmic Schr\"{o}dinger-like equation as a model
for magma transport}, Europhys. Lett. \textbf{63} (2003), 472--475.

\bibitem{Yasue:1978}
K~Yasue, \emph{ Quantum mechanics of nonconservative systems}, Ann. Phys. \textbf{114(1-2)} (1978), 479--496.

\bibitem{Zlosh:2010}
K~G~Zloshchastiev, \emph{ Logarithmic nonlinearity in theories of quantum gravity: origin of time and
observational consequences}, Grav. Cosmol.\textbf{ 16}  (2010), 288--297.

%well-posedness
\bibitem{Pava:2017}
J~A~Pava, N~Goloshchapova, \emph{ Stability of standing waves for NLS-log equation with $\delta$
-interaction}, Nonlinear Differ. Equ. Appl.\textbf{ 24, 27} (2017). https://doi.org/10.1007/s00030-017-0451-0.

%regularized

\bibitem{Baosu:2019}
W~Bao, R~Carles, C~Su and Q~Tang, \emph{Regularized numerical methods for the logarithmic
Schr\"{o}dinger equation}, Numer. Math. \textbf{143} (2019), 461--487.

\bibitem{Baosu:2019_2}
W~Bao, R~Carles, C~Su and Q~Tang,\emph{ Error estimates of a regularized finite difference
method for the logarithmic Schr\"{o}dinger equation}, SIAM J. Numer. Anal. \textbf{57} (2019), 65--680.



\bibitem{Baosu:2022}
W~Bao, et al., \emph{Error estimates of local energy regularization for the logarithmic Schr\"{o}dinger equation}, Mathematical Models and Methods in Applied Sciences. \textbf{ 32.01} (2022), 101--136.


%delta
%15
\bibitem{Aklan:2015}
Aklan~N, Umarov~B, \emph{Interaction of solitons with delta potential in the cubic-quintic nonlinear Schr\"{o}dinger equation}, International Conference on Research and Education in Mathematics. IEEE, (2015).

\bibitem{Fem:1991}
Akrivis~G~D, Dougalis~V~A,Karakashian~Q~A, \emph{ On fully discrete Galerkin methods
of second-order temporal accuracy for the nonlinear Schr\"{o}dinger equation}, Numer. Math.,
\textbf{59} (1991), 31--53.






%ejiim 
\bibitem{Multi-sym:2018}
Bai~J, Li~C and Liu~X~Y, \emph{Weak multi-symplectic reformulation and geometric numerical integration for the nonlinear Schr\"{o}dinger equations with delta potentials}, IMA J. Numer. Anal., \textbf{38} (2018), 399--429.

\bibitem{Ejiim:2000}
Wiegmann~A , Bube~K~P, \emph{ The explicit-jump immersed interface method: finite difference methods for PDEs with piecewise smooth solutions}, SIAM. J. Numer. Anal., \textbf{37} (2000).

\bibitem{IIM:1994}
LeVeque~R~J, Li~Z~L, \emph{The Immersed interface method for elliptic equation with discontinuous coefficients and singular sources}, SIAM J. Numer. Anal. \textbf{31} (1994), 1019--1044.

\bibitem{Peskin:2002}
Peskin~C~S, \emph{The immersed boundary method}, Acta. Numer., \textbf{11} (2002), 479--517.

\bibitem{Ejiimns:2008}
Rutka~V , Li~Z, \emph{ An explicit jump immersed interface method for two-phase Navier-Stokes equations with interfaces}, Comput. Method. Appl. M.,\textbf{197} (2008), 2317--2328.

\bibitem{Rutka:2006}
Rutka~V, Wiegmann~A, \emph{ Explicit jump immersed interface method for virtual material design of the effective elastic moduli of composite materials}, Numer. Algor., \textbf{43} (2006), 309--330.


%19
\bibitem{Baocai:2012}
Bao~W~Z, Cai~Y~Y, \emph{ Uniform error estimates of finite difference methods for the nonlinear Schr\"{o}dinger equation with wave operator}, SIAM J. Numer. Anal., \textbf{50} (2012), 492--521.

\bibitem{Baocai:2013}
Bao~W~Z, Cai~Y~Y, \emph{ Optimal error estimates of finite difference methods for the Gross-Pitaevskii equation with angular momentum rotation}, Math. Comput., \textbf{82} (2013), 99--128.












\bibitem{Fdm:2020}
Cheng~B, Chen~Y~M , Xu~C~F, et.al., \emph{Nonlinear Schr\"{o}dinger equation with a Dirac delta potential: finite difference method}, Commun. Theor. Phys., \textbf{72} (2020).

\bibitem{Good:2004}
Goodman~G~H, Holmes~P~H and Weinstein~M`I, \emph{ Strong NLS soliton-defect interactions}, Physica D.,\textbf{192} (2004), 215--248.

\bibitem{Holmer:2007}
Holmer~J, Marzuola~J and Zworski~M, \emph{Soliton splitting by external delta potential}, J. Nonlinear. Sci., \textbf{17} (2007), 349--367.

\bibitem{Holmer:2007-2}
Holmer~J, Marzuola~J and Zworski~M,\emph{ Fast soliton scattering by delta impurities}, Commun. Math. Phys., \textbf{274} (2007), 187--216.




%\bibitem{Ike:2017}
%Ikeda~M, Inui~T, \emph{ Global dynamics below the standing waves for the focusing semilinear Schr\"{o}dinger
%equation with a repulsive Dirac delta potential}, Anal. PDE., \textbf{10} (2017), 481--512.















\bibitem{Peskin:1977}
Peskin~C~S, \emph{ Numerical analysis of blood flow in the heart}, J. Comput. Phys., \textbf{25} (1977), 220--252.





\bibitem{Wavelet:2017}
Qian~X, Fu~H and Song~S, \emph{ Conservative modified Crank-Nicolson and time-splitting wavelet methods for modeling Bose-Einstein condensates in delta potentials}, Appl. Math. Comput., \textbf{307} (2017),1--16.










\bibitem{Sulem:1999}
Sulem~C, Sulem~P~L, \emph{The Nonlinear Schr\"{o}dinger Equation: Self-focusing and Wave Collapse}, New York: Springer, (1999).


\bibitem{Tornberg:2003}
Tornberg~A~K, Bj\"{o}rn Engquist, \emph{ Regularization techniques for numerical approximation of PDEs with singularities}, J. Sci. Comput., \textbf{19} (2003), 527--552.

%\bibitem{Wang:2013}
%Wang~T~C, Guo~B~L and Xu~Q~B, \emph{ Fourth-order compact and energy conservative difference schemes for %the nonlinear  Schr\"{o}dinger equation in two dimensions}, J. Comput. Phys., \textbf{243} (2013), 382--399.





%domain
\bibitem{Ddm:2004}
Toselli~A, Widlund~O, \emph{ Domain decomposition methods-algorithms and theory}, Springer Science and Business Media, 2004.

\bibitem{Ddm:2020}
Hoang~T, Ju~L and Wang~Z, \emph{ Nonoverlapping localized exponential time differencing methods for diffusion problems}, J. Sci. Comput., \textbf{ 82} (2020).

\bibitem{Ddm:1999}
Xu~J, Zou~J, \emph{ Some nonoverlapping domain decomposition methods}, SIAM. Review., \textbf{40} (1999), 857--914.


\bibitem{Caizhou:2023}
Zhou~X~X Cai~Y~Y, \emph{Accurate and efficient numerical methods for the nonlinear
Schr\"{o}dinger equation with dirac delta potential}, Calcolo, 2023.

\end{thebibliography}

\end{document}